\newtheorem{thm}{Theorem}[section]
\newtheorem{prop}[thm]{Proposition}
\theoremstyle{definition}
\newtheorem{defn}[thm]{Definition}
\newtheorem{exmp}[thm]{Example}
\theoremstyle{remark}
\newtheorem{rem}[thm]{Remark}
\newcommand{\sA}{{\mathcal A}}
\newcommand{\sC}{{\mathcal C}}
\newcommand{\sE}{{\mathcal E}}
\newcommand{\sG}{{\mathcal G}}
\newcommand{\sP}{{\mathcal P}}
\newcommand{\sD}{{D}}
\newcommand{\QQ}{{\mathbb{Q}}}
\newcommand{\NN}{{\mathbb{N}}}
\newcommand{\RR}{{\mathbb{R}}}
\newcommand{\ZZ}{{\mathbb{Z}}}
\newcommand{\bw} {{\bf w}}
\numberwithin{equation}{section}
\newcommand{\stdnodesep}{3}
\newcommand{\doffset}{3pt}
\newcommand{\node}[3]{\rput{0}(#2){\ovalnode{#3#1}{\Large #1}}}
\newcommand{\aline}[3]{%
	\ncline[nodesepA=\stdnodesep,nodesepB=\stdnodesep]%
	{->}{#1}{#2}%
	\Aput{#3}%
}
\newcommand{\dline}[4]{%
	\ncarc[nodesepA=\stdnodesep,nodesepB=\stdnodesep,offset=\doffset]%
	{->}{#1}{#2}%
	\Aput{#3}%
	\ncarc[nodesepA=\stdnodesep,nodesepB=\stdnodesep,offset=\doffset]%
	{->}{#2}{#1}%
	\Aput{#4}%
}
\newcommand{\bcircle}[3]{%
	\nccircle[angleA=#2,nodesepA=\stdnodesep]{->}{#1}{20pt}%
	\Bput{#3}%
}
\title{$P$-Adic Path Set Fractals and Arithmetic}
\author{William  Abram and Jeffrey C. Lagarias}
\subjclass[2010]{Primary: 11K55, Secondary:  11S82, 28A80, 37B10}
\keywords{p-adic arithmetic, finite automata, graph-directed systems, Hausdoff dimension}
\date{November 10, 2013}
\thanks{Work of the second author was partially supported by NSF grant DMS-1101373.}
\begin{document}

\begin{abstract}

This paper considers a class 
of closed subsets of the $p$-adic integers $\ZZ_p$ obtained by 
graph-directed constructions analogous to that of Mauldin and Williams
over the real numbers. These sets  are characterized as the
collection of those $p$-adic integers
whose points have $p$-adic expansions describable  by  paths 
 in the graph of a finite automaton issuing from a distinguished initial vertex.
 This paper shows  that this class of sets is  closed under the arithmetic
operations of 
 addition and multiplication by $p$-integral rational numbers.
 In addition the Minkowski  sum (under $p$-adic addition) of two sets in this class is shown
 to be a set in this class. These results represent purely $p$-adic phenomena
 in that analogous closure properties do not hold over the real numbers. 
 We also show the existence of computable  formulas  for  the Hausdorff dimensions of  such sets.

\end{abstract}

\maketitle


%
%
%
\section{Introduction}

This paper studies  a distinguished collection $\sC(\ZZ_p)$ of 
 closed subsets   of the $p$-adic integers $\ZZ_p$,
 whose  members $Y$ are specified as
  sets of $p$-adic integers whose 
$p$-adic expansions are  given by 
infinite labeled paths starting from a fixed initial state of a 
a  finite automaton, with edge  labels specifying
 $p$-adic digits.  We term such sets
{\em $p$-adic path set fractals,} because they generally
have non-integer Hausdorff dimension, and because they
 may be constructed geometrically in a fashion 
analogous to that of the (real-valued) geometric
graph-directed fractals of Mauldin and Williams \cite{MW86}, \cite{MW88},
as explained in Section \ref{sec2}.

The  set of edge-labeled infinite  paths in the graph of an automaton
which start  from a fixed state can be studied  abstractly 
 in terms of one-sided symbolic dynamics,
as we consider elsewhere (\cite{AL12a}).
 Each such set
 defines a subset $X_{\sG}(v)$ of the symbol space  $ \sA^{\NN}$,
 where $\sA$ is a finite symbol alphabet, 
 which is specified by a {\em presentation} $(\sG, v)$, in which $\sG$ is a labeled directed graph with edges
 labeled by elements of $\sA$, and $v$ is a marked initial vertex of $\sG$.  We call
 $X_{\sG}(v)$ an (abstract) {\em path set}. Path sets are closed subsets of the compact set $\sA^{\NN}$
 endowed with the product topology,
  but they are
 generally not invariant under the (one-sided) shift $\sigma: \sA^{\NN} \to \sA^{\NN}$
 given by $\sigma(\alpha_0, \alpha_1, \alpha_2, \cdots) = (\alpha_1, \alpha_2, \alpha_3 , \cdots)$.
 The collection of path sets is closed under set union and set intersection, but is not closed under complementation
 inside the symbol space $\sA^{\NN}$.
 
 A {\em $p$-adic path set fractal} $Y$ is  the image of an
 abstract  path set  embedded as  a geometric object inside 
 a $p$-adic space $\ZZ_p$, 
 using the  symbol sequence to obtain the $p$-adic digit labeling. 
 More precisely,  $Y = f_p( X_{\sG}(v))\subset \ZZ_p$ where
 $f_p : \sA^{\NN} \to \ZZ_p$ is a (continuous) map that sends a
 symbol sequence to a set of $p$-adic digits, using a {\em digit assignment map} $\bar{f}_p: \sA \to \{0, 1, ..., p-1\}$. 
 The digit assignment map $\bar{f}_p$ need not be one-to-one, consequently  a given  abstract path set $X_{\sG}(v)$ has 
  embeddings into  each $\ZZ_{p}$ for every prime $p$, moreover it typically has different embeddings into a fixed $\ZZ_p$,
  giving rise to different $p$-adic path set fractals. 
  In the reverse direction any given $p$-adic path set fractal can 
   be obtained as the image of different abstract path sets using different digit assignment maps.
  However any $p$-adic path set fractal  $Y$ can always be obtained by a one-to-one embedding from a suitably
 chosen  path set on the fixed alphabet $\sA=\{0, 1, ..., p-1\}$, 
  see Proposition \ref{pr30} below. We call the data $(\bar{f}_p, \sG, v)$ a {\em presentation} of the $p$-adic path set
 fractal, and write  $Y:=(\bar{f}_p, \sG, v)$.
  
  The initial part of this paper gives a formula for the  Hausdorff dimension of
  such a set $Y$  
   in terms of the spectral radius of the adjacency matrix of an underlying automaton
  of a suitable presentation of $Y$, described in Section \ref{sec3}.
  The Hausdorff dimension of  $Y= f_p(X_{\sG}(v))$ 
  depends  on the underlying
  path set $X_{\sG}(v)$, the value of $p$,  
 and on the  digit assignment map $f_p$.
  We obtain the formula by relating 
  the $p$-adic constructions  of this paper to the real number constructions of
 Mauldin and William, which permit  carrying
 over their  formulas  for  Hausdorff dimension of (real) graph-directed fractals
 to the $p$-adic case.

The main object of this paper is to show that 
the collection of $p$-adic path set fractals $\sC(\ZZ_p)$ is closed  under
the following operations using $p$-adic arithmetic: 
\begin{enumerate}
\item
 $p$-adic addition of a  rational number $r \in \QQ \cap \ZZ_p$; 
 such $r$ are called {\em $p$-integral.}
 \item
$p$-adic multiplication by a $p$-integral rational number $r$;
\item
set-valued addition (Minkowski sum) of two $p$-adic path sets, using $p$-adic addition.
\end{enumerate}
These closure results represent purely $p$-adic phenomena in the sense that analogous 
closure results for applying real arithmetic operations\footnote{That is, for   addition to and multiplication  of a set by rational numbers, 
or for the Minkowski sum of two sets.}
fail to hold for 
Mauldin-Williams 
graph-directed fractals over the real numbers.
We show that the  finite automata describing  the new sets  
given by  these operations are effectively computable;  
these new automata depend  on the input automata and on the value of  $p$
in a complicated way having a number-theoretic flavor.

%
%
%
\subsection{Results}\label{sec11}

As a preliminary to  $p$-adic results,    
 in  Section \ref{sec2}  we review the Mauldin-Williams construction
of graph-directed fractals over the real numbers.
We then  formulate an alternate definition of $p$-adic path set fractals, defining 
them geometrically as sets given  by a solution of a set-valued functional
equation using $p$-adic contracting maps 
(Theorem \ref{pr221} and Definition \ref{de26}). 
That is, they are characterized as a  
set-valued fixed point of a $p$-adic graph-directed fractal construction.
Then   we  establish  the equivalence of this geometric
definition  to  the symbolic dynamics 
definition  given above,  in terms of 
$p$-adic expansions describable by a finite automaton,
i.e.  the image  of a path set in $\ZZ_p$ under a digit assignment map. (Theorem \ref{self-sim}). 

To state  results, we need some additional  terminology on presentations.
\begin{enumerate}
\item
A presentation $Y :=(\bar{f}_p, \sG, v)$ of a $p$-adic path set fractal
is   {\em injective} if the digit assignment map $\bar{f}_p$ is one to one.
\item
A  path set presentation $(\sG, v)$ is  {\em right-resolving} if the  directed graph underlying $\sG$
has the property that  at each vertex of the graph, all exiting edges have different labels. 
\item
A path set presentation  $(\sG, v)$  is
{\em reachable} if every vertex in $\sG$ can be reached by a directed path from $v$.
\end{enumerate}
All $p$-adic path set fractals $Y$ have presentations that are injective,   right-resolving and reachable, 
see Proposition \ref{pr30}.
We call any such presentation a {\em standard presentation}. In a standard presentation one
may always choose to relabel the symbol alphabet  $\sA = \{0,1, 2,..., p-1\}$, 
and choose the digit assignment map to be the identity map.

Our first result concerns the Hausdorff dimension $d_{H}(Y)$ of a $p$-adic path set fractal
$Y = f_p(X_{\sG}(v))$. We 
show that $d_{H}(Y)$ is 
directly computable from a suitable
presentation of $Y$ as a $p$-adic path set fractal,
and is of an expected form.

\begin{thm}\label{th12} {\em (Hausdorff dimension)}
Let $Y$ belong to $\sC(\ZZ_p)$ and let $Y:=(\bar{f}_p, \sG, v_0)$
be any standard presentation.
  Then its Hausdorff dimension $ d_{H}(Y)$
is given by
\[
d_{H}(Y) = \log_p \sigma(A({\sG})) = \frac{ \log \sigma(A(\sG))}{\log p},
\]
where
$\sigma(A)$ denotes the spectral radius of the adjacency matrix
$ A =A(\sG) := [ a_{i,j}],$
 of $\sG$, in which $a_{i,j}$ counts the number of directed edges
from  vertex $i$ to vertex $j$ of the underlying directed graph  of $\sG$.
\end{thm}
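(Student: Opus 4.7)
The plan is to invoke the Mauldin--Williams dimension formula for graph-directed self-similar fractals and transfer it from the real to the $p$-adic setting. By Theorem \ref{pr221} and Definition \ref{de26}, for a standard presentation $Y := (\bar{f}_p, \sG, v_0)$ the set $Y$ is the unique nonempty compact set-valued fixed point of the graph-directed system on $\ZZ_p$ whose underlying graph is $\sG$ and whose edge maps are $\phi_d(z) = d + pz$ for $d \in \{0, 1, \ldots, p-1\}$. Every such $\phi_d$ has $p$-adic contraction ratio $1/p$. Because the standard presentation is right-resolving, the edges exiting any fixed vertex carry distinct digit labels, so the images $\phi_d(\ZZ_p) = d + p\ZZ_p$ attached to distinct outgoing edges are pairwise disjoint clopen balls of radius $1/p$. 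This ultrametric disjointness is the $p$-adic analogue of (and is stronger than) the open set condition needed by Mauldin--Williams.

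The main computation then runs as follows. The Mauldin--Williams theorem identifies the Hausdorff dimension of such a graph-directed attractor as the unique $s \geq 0$ for which the matrix $M(s)$ with $(i,j)$ entry $\sum_{e: i \to j} r_e^s$ has spectral radius $1$. Here every $r_e = 1/p$, so $M(s) = p^{-s} A(\sG)$, and $\sigma(M(s)) = 1$ reduces to $p^{-s}\sigma(A(\sG)) = 1$, yielding $s = \log_p \sigma(A(\sG))$. To carry this value over to the $p$-adic setting I would introduce the parallel real-valued Mauldin--Williams system on $[0,1]$ whose edge maps are $x \mapsto (d+x)/p$: it has the same graph $\sG$, the same uniform contraction $1/p$, and its digit-expansion attractor mirrors $Y$ cylinder by cylinder. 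Since at each scale $p^{-n}$ the natural cover of $Y$ by length-$n$ cylinders and the natural cover of the real attractor by length-$n$ cylinders are indexed by the same length-$n$ paths in $\sG$, the two Hausdorff dimensions coincide, which gives the stated formula.

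The main obstacle lies in justifying that the MW formula supplies the \emph{exact} Hausdorff dimension rather than merely an upper bound, which in the classical setting requires an open set condition. I would resolve this by observing that in $\ZZ_p$ the ultrametric makes disjoint clopen balls automatically separated, so the right-resolving and injective properties of a standard presentation give the condition for free. As a cross-check and alternative, one can bypass the real detour by direct box counting: the number $N_n$ of length-$n$ paths from $v_0$ equals $\mathbf{1}_{v_0}^{T} A(\sG)^n \mathbf{1}$, and by injectivity and right-resolving these correspond to $N_n$ pairwise disjoint $p$-adic balls of radius $p^{-n}$ covering $Y$; Perron--Frobenius applied to the reachable part of $\sG$ gives $\frac{1}{n}\log N_n \to \log \sigma(A(\sG))$, and in the ultrametric $\ZZ_p$ the disjoint covers are optimal, forcing box-counting and Hausdorff dimension both to equal $\log_p \sigma(A(\sG))$. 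The independence of the answer from the choice of standard presentation is then automatic from the intrinsic nature of $d_H(Y)$.
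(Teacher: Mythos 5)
Your overall route is the same as the paper's: push $Y$ into $[0,1]$ by the radix map $\iota_p$, recognize the image as a Mauldin--Williams attractor with all contraction ratios $1/p$, note that $\iota_p$ preserves Hausdorff dimension, and solve $\sigma(p^{-s}A(\sG))=1$. The core computation is correct, and your observation that right-resolving yields disjoint clopen images (the ultrametric substitute for the open set condition) is exactly the point the paper uses to verify condition (G3)(a).

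There are, however, two concrete gaps. First, you apply a version of Mauldin--Williams in which the dimension matrix has entries $\sum_{e:i\to j} r_e^s$, i.e.\ a multigraph version; but the framework set up in the paper (and in \cite{MW88}) requires \emph{at most one} directed edge between any ordered pair of vertices and pairwise non-overlapping seed sets $J_i$. A standard presentation of a $p$-adic path set fractal typically violates both: the paper's very first example is a single vertex with two self-loops, adjacency matrix $(2)$, and your ``parallel real system on $[0,1]$'' uses the same seed for every vertex. The paper spends most of its proof repairing this: it translates the sub-objects to disjoint intervals $[2j,2j+1]$, performs a vertex-splitting (``right-separating'') reduction to eliminate multiple edges, and then must separately verify that this splitting does not change the spectral radius of the adjacency matrix. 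Either you need that reduction (with the spectral-radius invariance argument), or you need to explicitly invoke a multigraph formulation of the MW theorem; as written the step ``apply MW to $\sG$'' does not parse in the stated framework. Second, your box-counting ``cross-check'' only delivers the upper bound: disjointness of the $N_n$ cylinders gives $\dim_H Y\le \overline{\dim}_B Y\le \log_p\sigma(A)$, but the lower bound is not a consequence of ``disjoint covers are optimal.'' When $G$ has several strongly connected components, the number of continuations from a reachable vertex $w$ can grow like $\sigma(A_{C_w})^n$ with $\sigma(A_{C_w})<\sigma(A)$, so a mass-distribution or component-by-component argument (Proposition \ref{prop23}, using reachability to see that every component is accessible from $v_0$) is genuinely needed to close the lower bound.
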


This result is proved in Section \ref{sec3}, where it is deduced from a result
 relating these sets to real number graph-directed
fractals (Theorem \ref{th31a}), where a similar Hausdorff dimension
formula has long been known. We use the  fact  that Hausdorff dimension
is preserved under the map taking a $p$-adic expansion of a $p$-adic integer to the base $p$ radix expansion
of a real number. We show  
that   the image of  the $p$-adic  objects studied here 
are construction sub-objects of particular
 real number constructions of Mauldin and Williams.

The main results of this paper concern   $p$-adic arithmetic operations
applied to $p$-adic path set fractals.
We begin with  addition of $p$-adic rationals. 

\begin{thm}\label{th13} {\em (Closure under rational addition)}
Let $Y$ belong to  $\sC(\ZZ_p)$.   Then
for any $p$-integral rational number $r \in \QQ \cap \ZZ_p$, the additively shifted set  
\[
Y+ r := \{ y + r:  y \in Y\}
\]
has $Y+r \in \sC(\ZZ_p)$. 
\end{thm}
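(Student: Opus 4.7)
The plan is to construct, from a standard presentation of $Y$, a new finite automaton whose labeled paths record exactly the $p$-adic digits of elements of $Y+r$. The key ingredient is that every $p$-integral rational has an eventually periodic $p$-adic expansion, so the digits of $r$ can be tracked by a finite state machine. By Proposition \ref{pr30} I may assume $Y$ has a standard presentation $(\bar{f}_p, \sG, v_0)$ with alphabet $\sA = \{0,1,\dots,p-1\}$ and $\bar{f}_p$ the identity, so reading off edge labels along any path from $v_0$ produces exactly the $p$-adic digits $(y_0, y_1, y_2, \dots)$ of a corresponding $y \in Y$.

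Write the $p$-adic expansion $r = \sum_{i \ge 0} r_i p^i$ with pre-period of length $m$ and period of length $L$, and let $\phi : \NN \to \{0,1,\dots,m+L-1\}$ be the phase function satisfying $r_i = r_{\phi(i)}$ with $\phi(i+1)$ determined by $\phi(i)$. I then build $\sG'$ on vertex set $V(\sG) \times \{0,1\} \times \{0,1,\dots,m+L-1\}$, interpreted as (original state, carry bit, phase). For each edge $v \xrightarrow{d} v'$ in $\sG$ and each triple $(v,c,j)$, form the unique decomposition $d + r_j + c = c' p + d'$ with $c' \in \{0,1\}$ and $d' \in \{0,1,\dots,p-1\}$; insert an edge $(v,c,j) \xrightarrow{d'} (v', c', \phi(j+1))$ in $\sG'$. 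The carry stays in $\{0,1\}$ because $d + r_j + c \le 2(p-1)+1 < 2p$. Declare the initial vertex to be $v_0' := (v_0, 0, 0)$.

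To finish, I verify by induction on path length the following correspondence: a one-to-one map between infinite paths in $\sG$ starting at $v_0$ and infinite paths in $\sG'$ starting at $v_0'$, under which a path encoding the $p$-adic digits of $y \in Y$ is sent to the path whose labels are the $p$-adic digits of $y + r$. This establishes that $Y+r$ is precisely $f_p(X_{\sG'}(v_0'))$ under the identity digit assignment, and since $\sG'$ is a finite labeled graph, $Y+r \in \sC(\ZZ_p)$. The only substantive obstacle is packaging the carry propagation together with the periodic digit sequence of $r$ into a single finite state space; once one sees that eventual periodicity of $r$ guarantees finitely many (carry, phase) pairs, the construction and verification are mechanical. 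A minor clean-up step is that $(\bar{f}_p, \sG', v_0')$ need not itself be a standard presentation, but Proposition \ref{pr30} immediately supplies one, completing the proof.
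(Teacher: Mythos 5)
Your construction is correct and is essentially the paper's own proof (Theorem \ref{th41}): both track the triple (state of $\sG$, carry, phase in the eventually periodic $p$-adic expansion of $r$) and verify the digit identity $\sum_{k<n} d_k' p^k + c_n p^n = y_n + r_n$ by induction on path length. The only cosmetic differences are that you use the sharper carry bound $c\in\{0,1\}$ (the paper uses $e\le 2$) and defer right-resolvingness to Proposition \ref{pr30}, whereas the paper carries an extra entering-label component in the state and checks right-resolvingness directly.
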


The proof of this result  is constructive and shows  
 that given a standard   presentation $Y := (\bar{i}_p, \sG, v)$
one can directly compute from it a standard  presentation  $Y+r := (\bar{i}_{p}, \sG', v')$.
The new presentation depends on both  $r \in \QQ$ and  the
value of $p$.

Theorem \ref{th13}  also follows as  a  special case of the following result.

\begin{thm}\label{th14} {\em (Closure under Minkowski sum)}
Let $Y_1, Y_2 \in \sC(\ZZ_p)$
be two $p$-adic path set fractals in $\ZZ_p$. Then
their Minkowski sum-set 
\[
Y_1 + Y_2 := \{ y_1 + y_2:  y_1 \in Y_1, \, y_2 \in Y_2\}
\]
has $Y_1 + Y_2 \in \sC(\ZZ_p).$
\end{thm}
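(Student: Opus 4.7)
\noindent\emph{Proof proposal.} The plan is to construct a finite automaton presenting $Y_1+Y_2$ by a product-with-carry construction. The key arithmetic observation is that in $p$-adic addition the carry propagates from low-order to high-order digits---the same direction in which labels are read along an infinite path in a path-set automaton---and that the carry takes only the two values $0$ and $1$, since $a+b+c \le 2(p-1)+1 < 2p$ whenever $a,b\in\{0,\dots,p-1\}$ and $c\in\{0,1\}$.

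First, by Proposition \ref{pr30} fix standard presentations $Y_i = (\bar{i}_p,\sG_i,v_i)$ for $i=1,2$, so each $\sG_i$ has edge labels in $\{0,1,\dots,p-1\}$ and the digit assignment is the identity. I would define a new labeled directed graph $\sG$ on the vertex set $V(\sG_1)\times V(\sG_2)\times\{0,1\}$ as follows: for each pair of edges $u_1\stackrel{a}{\to}u_1'$ in $\sG_1$ and $u_2\stackrel{b}{\to}u_2'$ in $\sG_2$, and each $c\in\{0,1\}$, write
\[
a+b+c = c'\,p + d, \qquad d\in\{0,\dots,p-1\},\ c'\in\{0,1\},
\]
and introduce an edge from $(u_1,u_2,c)$ to $(u_1',u_2',c')$ carrying label $d$. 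Mark $(v_1,v_2,0)$ as the initial vertex.

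Next, I would verify that $f_p(X_{\sG}((v_1,v_2,0))) = Y_1+Y_2$. For the inclusion $\subseteq$, an infinite path in $\sG$ projects to infinite paths in $\sG_1$ and $\sG_2$ producing some $y_1\in Y_1$ and $y_2\in Y_2$; a straightforward induction on initial segments shows that the carry coordinate at stage $n$ equals the carry into position $n$ in the long-addition of $y_1$ and $y_2$, so the label sequence along the path is precisely the $p$-adic expansion of $y_1+y_2$. The reverse inclusion $\supseteq$ is symmetric: given $y_i\in Y_i$ with realizing paths in $\sG_i$, one inductively defines the carry sequence and lifts them to a path in $\sG$ starting at $(v_1,v_2,0)$ with label sequence equal to the expansion of $y_1+y_2$.

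The triple $(\bar{i}_p,\sG,(v_1,v_2,0))$ is then an injective presentation of $Y_1+Y_2$, so by Proposition \ref{pr30} (trimming unreachable vertices and, if needed, applying a subset-construction determinization to make it right-resolving) one obtains a standard presentation, proving $Y_1+Y_2\in\sC(\ZZ_p)$. The main obstacle is the inductive verification that the ``internal'' carry coordinate in $\sG$ matches the actual carry arising in digitwise $p$-adic addition of $y_1$ and $y_2$; once that invariant is isolated, the rest is bookkeeping, and it is precisely the finiteness of the carry alphabet (a feature of $p$-adic but not real arithmetic, where carries propagate in the opposite direction to the reading of digits) that keeps the state space of $\sG$ finite and hence the construction inside $\sC(\ZZ_p)$.
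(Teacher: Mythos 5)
Your proposal is correct and follows essentially the same route as the paper: a product automaton on $V(\sG_1)\times V(\sG_2)$ augmented by a bounded carry coordinate, with the edge label given by $a+b+c \bmod p$ and the new carry by the quotient, followed by the observation that the result need not be right-resolving but can be determinized. The only differences are cosmetic improvements: you drop the extra ``incoming label'' coordinate that the paper carries in its vertex states (it is used there only to recover the pair of input paths, not for the set equality), and your carry bound $c\in\{0,1\}$ is tighter than the paper's bound $e\le 2$; both suffice for finiteness of the state set.
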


The proof of this result is constructive
and shows that given standard presentations for
$Y_1:=(\bar{i}_p, \sG_1, v_1) $ and $Y_2= (\bar{i}_p, \sG_2, v_2) $ one can  directly construct a 
(not necessarily standard) presentation
$Y_1+ Y_2= (\bar{i}_{p}, \sG_3, v_3)$.
In the given construction the  underlying path set presentation
$(\sG_3, v_3)$ produced  is not necessarily right-resolving.
However  there exist  standard algorithms to convert any given 
path set presentation to one that is right-resolving, see \cite[Theorem 3.2]{AL12a}. 

The statement of Theorem ~\ref{th13} can be recovered 
from Theorem \ref{th14} as the special case that 
the set $Y_2$
is a one-element set, using the easy observation that 
the only path sets in $\ZZ_p$ consisting
of a single element are  those where this element is
a rational $r \in \QQ \cap \ZZ_p$ (Theorem ~\ref{th15}). 
However  the presentation  obtained by the construction
of Theorem \ref{th14} is not necessarily
right-resolving, while the construction given  in the proof of Theorem \ref{th13}
is right-resolving.

We next consider multiplication by $p$-adic rationals.

\begin{thm}\label{th16}{\em (Closure under rational multiplication)}
Let $Y$ belong to  $\sC(\ZZ_p)$. Then
for any rational number $r \in \QQ \cap \ZZ_p$, 
the dilated set 
\[
rY := \{ ry : y \in Y \}
\]
has $r Y \in \sC(\ZZ_p)$. 
\end{thm}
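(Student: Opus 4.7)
The plan is to construct a presentation of $rY$ directly from a standard presentation $(\bar{i}_p,\sG,v_0)$ of $Y$, by composing $\sG$ with a finite-state transducer that implements the map $y\mapsto ry$ on $p$-adic digit expansions in least-significant-first order. Since $r\in\QQ\cap\ZZ_p$, write $r=m/n$ with $m\in\ZZ$ and $n\in\ZZ_{>0}$, $\gcd(n,p)=1$. Because $rY=m\cdot\bigl((1/n)Y\bigr)$, it suffices to establish closure under multiplication by an integer and under multiplication by $1/n$ with $\gcd(n,p)=1$.

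For multiplication by a positive integer $m$, we use the carry transducer $T_m$ with state set $\{0,1,\ldots,m-1\}$: in state $c$ on input digit $d\in\{0,\ldots,p-1\}$, write $md+c=d'+pc'$ with $d'\in\{0,\ldots,p-1\}$, emit $d'$, and advance to state $c'$; start at $c=0$. Straightforward estimates show $c'\in\{0,\ldots,m-1\}$, so $T_m$ is finite. The product construction $\sG\times T_m$ has vertices $V(\sG)\times\{0,\ldots,m-1\}$ and an edge $(v,c)\xrightarrow{d'}(v',c')$ for each edge $v\xrightarrow{d}v'$ of $\sG$, with initial vertex $(v_0,0)$; paths in this graph simulate long multiplication by $m$ on the $p$-adic digits and hence present $mY$. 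Multiplication by $-1$ is handled by a two-state transducer (``still reading leading zeros'' vs.\ ``past first nonzero digit'') that realizes the standard $p$-adic negation, and composing it with $T_{|m|}$ handles arbitrary negative integers.

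For multiplication by $1/n$ with $\gcd(n,p)=1$, we build the deterministic transducer $T_{1/n}$ with state set $\{0,1,\ldots,n-1\}$ as follows: in state $c$, on input digit $y$, emit $x:=n^{-1}(y-c)\bmod p$ and advance to state $c':=(nx-y+c)/p$, starting at $c=0$. Correctness is captured by the invariant
\[
n\bigl(x_0+x_1p+\cdots+x_kp^k\bigr)-\bigl(y_0+y_1p+\cdots+y_kp^k\bigr)=p^{k+1}c_k,
\]
and the bound $0\le c_k\le n-1$ follows from the elementary estimates $0\le nx_k\le n(p-1)$ and $0\le y_k\le p-1$. The product $\sG\times T_{1/n}$ then presents $(1/n)Y$.

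The main obstacle is the construction of $T_{1/n}$: although $1/n$ has an infinite $p$-adic expansion, we must still produce a forward-reading deterministic transducer for $y\mapsto y/n$. The key point is that $\gcd(n,p)=1$ makes $n$ invertible modulo $p$, which uniquely determines the next output digit from the current input digit and carry, while the residue analysis keeps the carry bounded in $\{0,\ldots,n-1\}$. Once $T_m$ and $T_{1/n}$ are in hand, the product automaton construction is routine and preserves finiteness of the labeled graph, yielding a (not necessarily standard) presentation of $rY$ and hence $rY\in\sC(\ZZ_p)$.
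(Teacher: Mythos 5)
Your proposal is correct and follows essentially the same route as the paper: factor $r$ into elementary pieces and realize each by a bounded-carry finite-state transducer composed with $\sG$ via a product construction, with the same carry bounds ($c\in\{0,\dots,m-1\}$ for $T_m$, $c\in\{0,\dots,n-1\}$ for $T_{1/n}$) and the same telescoping invariant $nX_k-Y_k=p^{k+1}c_k$. The only cosmetic difference is that you absorb the power of $p$ into the integer factor $m$ (which sacrifices right-resolvability when $p\mid m$, but still certifies membership in $\sC(\ZZ_p)$), whereas the paper factors $r=(-1)^a p^k M_1/M_2$ and treats $p^k$ separately so that every intermediate presentation stays right-resolving.
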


We prove this result in Section \ref{sec5}.
This proof is constructive in the same sense as Theorem \ref{th12};
given a standard presentation for $Y$ there is (in principle) an algorithm to find 
a standard presentation 
for $rY$.
Theorem \ref{th16} is obtained by  concatenation of constructions for
several special  cases, as follows.
\begin{enumerate}
\item
   $r=M$ is a 
positive integer with $\gcd(p, M)=1$.
A positive integer has an infinite  $p$-adic expansion
with a finite pre-period and a periodic part with all  digits $0$.
\item $r = \frac{1}{M}$ is  the inverse of a positive
integer $M$ with $\gcd(p, M) =1$;  
\item
$r= -1.$ Note that  $-1$ has a purely periodic nonterminating $p$-adic expansion of period $1$: 
$$
-1= \sum_{k=0}^{\infty} (p-1)p^k = (\cdots, p-1, p-1,p-1)_p.
$$

\item $r=p^k$, for $k \ge 1$. 
\end{enumerate}

Finally we note that, at the level of symbolic dynamics, 
the arithmetic operations are not compatible with the
one-sided shift operation. That is, if
  $Y$ is a $p$-adic path set which is 
 invariant under
the one-sided shift $\sigma_p: \ZZ_p \to \ZZ_p$ defined by
\[
\sigma_p ( \sum_{j=0}^{\infty} \alpha_j \,p^j ) := \sum_{j=0}^{\infty} \alpha_{j+1} \,p^j,
\]
 then in general  the sets $Y+r$, $rY$ 
(for $p$-integral rational $r$) will {\em not} be invariant under 
the one-sided shift $\sigma_p$.

%
%
%

\subsection{Extensions and generalizations}\label{sec12}

In Theorem \ref{th12}  we give a formulas for Hausdorff dimension of the resulting $p$-adic
path set fractals  in terms of  the spectral radius of a nonnegative integer matrix
specifying the graph of the underlying path set.
This formula might initially appear unnecessary in the context of 
the $p$-adic arithmetic operations studied, because 
given any set $X \subset \ZZ_p$ and 
 any nonzero $\alpha \in \ZZ_p$, rational or not, 
the sets $X$, $X + \alpha$ and $\alpha X$ all have the same Hausdorff dimension.
This fact follows since  both sets $X+ \alpha$ and $\alpha X$
are  images of $X$ under  bi-Lipschitz mappings in the $p$-adic metric. 
In this context,  the usefulness of  Theorem \ref{th12}  lies rather  in the opposite direction:
using it, the known  equality of Hausdorff dimensions  
yields  the equality of the spectral radii of two
quite different appearing nonnegative matrices, usually of different sizes.

It is of interest  that the constructions of this paper  will, when given a fixed set $X$
as input, by varying $r$ yield infinite classes of
nonnegative integer matrices having a fixed spectral radius. This spectral radius
always equals the largest real eigenvalue, which 
is a particular type of real algebraic integer called a  weak Perron number. Here
a {\em weak Perron number} is a  (positive real) $n$-th root of a  {\em Perron number} for some $n \ge 1$, cf.
Lind \cite{Lin84}, \cite{Lin87}. A {\em Perron number} is a real algebraic integer $\beta>1$,
all of whose algebraic conjugates $\beta^{\sigma}$ are smaller in modulus, i.e.
$|\beta^{\sigma}| < \beta.$ Such  classes of matrices may be  worth
further study in connection with number theoretic problems, see Section \ref{sec7}.

The constructions of this paper can be combined with other operations
which preserve the property of being a $p$-adic path set fractal but
which do change the Hausdorff dimension.
For example, path sets are  closed under set union
and set intersection  (\cite[Theorem 1.2]{AL12a}),
with the new path set presentations being effectively computable from the given ones.
Set union changes the Hausdorff dimension in a predictable way, with the new set 
having dimension equal to 
 the maximum of the two dimensions, however set 
 intersection changes  Hausdorff dimension in seemingly  unpredictable ways.
Given presentations of $p$-adic path set fractals  $Y_1$ and $Y_2$
one can, in principle,  compute the Hausdorff dimension of intersections of
additive and multiplicative translates of these sets, such as 
$Y_1 \bigcap (Y_2 + r)$ and $Y_1 \bigcap (r Y_2)$.
This  study was undertaken  to answer questions of this kind that arose
 in connection with a problem of Erd\H{o}s, see  Erd\H{o}s  \cite{Erd79}, and papers \cite{Lag09},  \cite{AL12c}
 of the authors.
 Computed examples in \cite{AL12c} illustrate that the Hausdorff dimensions of 
 sets  $Y \cap (Y+r)$ and $Y \cap rY$
 vary with $r$, and the dependence on $r$ of these Hausdorff dimensions
 appears to be extremely complicated, with  interesting structure.

The class $\sC(\ZZ_p)$ of $p$-adic path set  fractals are closed under another operation: {\em decimation}, i.e.
extracting a fixed  arithmetic progression of their $p$-adic digits. We set
$$
\psi_{j, m} (\alpha_0, \alpha_1, \alpha_2, \cdots) = ( \alpha_j, \alpha_{j+m}, \alpha_{j+2m}, \alpha_{j+ 3m}, \cdots).
$$
and the define the $(j, m)$-decimated  set
$$
\psi_{j,m}(Y) =\{ \psi_{j,m}(x): x \in Y\},
$$ 
The fact that $\psi_{j, m}(Y)$ set belongs to $\sC(\ZZ_p)$ if $Y$ does
follows at the path set level from  \cite[Theorem 1.5]{AL12a}, which shows
that a presentation of $\psi_{j, m}(Y)$ is effectively computable 
given a 
standard presentation of $Y$.  
Study of  the effect of decimation  operations on Hausdorff dimension of the images
seems an interesting topic for further research.

There are a number of directions for further generalization. 
The methods of this paper apply to arithmetic operations  applied to 
the {\em $g$-adic numbers}  for arbitrary $g \ge 2$, as defined by Mahler \cite{Mah61}.
As a topological space one has  $\ZZ_g = \prod_{p|g} \ZZ_p$. However when $g$ contains prime powers
one would use a  $g$-adic expansion corresponding to the alphabet $\sA= \{0, 1, \cdots, g-1\}$.

A second generalization is to  allow sets in the $p$-adic numbers $\QQ_p$, in which case 
addition or multiplication of arbitrary rational numbers would be permitted. 
One may also generalize the notion of $p$-adic path sets to higher dimensions, which
would correspond to $(\ZZ_p)^d$. In this case one may investigate various relaxations
of the overlap conditions imposed in the Mauldin-Williams construction. In the
real number analogue $\RR^n$ results have been obtained by Ngai and Wang \cite{NW01}
and Das and Ngai \cite{DN04}.

%
%
%

\subsection{Contents of the paper}

Section \ref{sec2} recalls Mauldin-Williams constructions, gives
two equivalent characterizations of $p$-adic path set fractals, and
determines all $Y \in \sC(\ZZ_p)$ that  contain exactly one element.
Section \ref{sec3}  gives formulas for Hausdoff dimension of path set fractals.
Section \ref{sec4} proves results on
addition of rational numbers to $p$-adic path set fractals, and on
 set-valued addition of two $p$-adic path set fractals.
Section \ref{sec5} proves results on multiplication of $p$-adic path set fractals
by rational numbers.
Section \ref{sec6} presents  examples illustrating the results.
Section \ref{sec7} makes concluding remarks about how the constructions
of this paper relate to integer matices.

\subsection*{Acknowledgments}  The authors thank the reviewer for helpful comments.
W. Abram acknowledges the  support of an
NSF  Graduate Research Fellowship. 

%
%
%

\section{Relation to  Geometric  Graph-Directed Constructions} \label{sec2}

We first describe the Mauldin-Williams geometric graph-directed construction in the real number case.
Then we  formulate a (restricted) $p$-adic analogue to it, and show that all
$p$-adic path set fractals are obtained by such a construction, and conversely.
The final subsection  characterizes those $p$-adic path set fractals containing exactly one element.

%
%
%

\subsection{Mauldin-Williams graph-directed constructions} \label{sec21}

In the 1980's Mauldin and Williams \cite{MW88} introduced general
graph-directed constructions of fractal sets over the real numbers, and computed their
Hausdorff dimensions, see also Edgar \cite[Chap. 4]{Edg08}.
We follow the notation established in 
Mauldin and Williams \cite{MW88}.

\begin{defn} \label{de21}
A {\em geometric graph-directed construction} in $\mathbb{R}^m$ consists of:
\begin{enumerate}
\item[(G1)]
 a finite sequence of nonoverlapping\footnote{Sets $J_1$ and $J_2$ {\em overlap} if their intersection
has a nonempty interior.}
, compact subsets $J_1,\ldots,J_n$ of $\mathbb{R}^m$, such that each $J_i$ has a nonempty interior,
\item[(G2)]
 a directed graph $G$ with vertex set consisting of the integers $1,\ldots,n$, such
that for each pair $(i,j)$ there is  at most one directed edge from $i$ to $j$.
Additionally, this graph must have the following properties:
\begin{enumerate}
\item[(a)]
For  each vertex $i$, there must be
at least one exit edge, i.e. some $j$ such that $(i,j) \in G$,
\item[(b)]
The underlying undirected graph must be connected.
 \end{enumerate}
\item[(G3)]
 For each graph edge $(i,j)$ there is assigned a
 similarity map $T_{i,j}: \mathbb{R}^m \to \RR^m$,  with similarity ratio $t_{i,j}$ such that:
\begin{enumerate}
\item[(a)]
 for each $i$, $\{T_{i,j}(J_j) | (i,j) \in G\}$ is a nonoverlapping family and
\begin{equation} J_i \supseteq \bigcup \{T_{i,j}(J_j) | (i,j) \in G\}
\end{equation}
\item [(b)]
 if the path component of $G$ rooted at the vertex $i_1$ is a cycle: $ [i_1,\ldots,i_q,i_{q+1}=i_1],$
then these satisfy the contraction condition
\begin{equation} \prod_{k=1}^q t_{i_k,i_{k+1}} < 1.
\end{equation}
\end{enumerate}
\end{enumerate}
\end{defn}

Note that in this construction the similarity maps $T_{i,j}$ 
will be applied to map sets  in the reverse direction to that of the edges of $G$.

Now for each $i$ let $\mathcal{K}(J_i)$ denote the space of compact subsets of $J_i$, with the Hausdorff metric, 
$\rho_H$. 
 Mauldin and Williams prove the following:

\begin{prop} \label{pr212}
For each geometric graph-directed construction, 
there exists a unique vector of compact sets, $(K_1,\ldots,K_n) \in \prod_{i=1}^n \mathcal{K}(J_j)$ such that for each $i$,
\begin{equation} 
K_i = \bigcup \{T_{i,j}(K_j) | (i,j) \in G\}.
\end{equation}
\end{prop}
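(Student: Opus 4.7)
The plan is to realize $(K_1, \ldots, K_n)$ as the unique fixed point of a Hutchinson-type set-valued operator on a product of hyperspaces, via the Banach contraction principle in a suitably weighted metric. Each $\mathcal{K}(J_i)$ of nonempty compact subsets of the compact set $J_i$ is complete (in fact compact) under the Hausdorff metric $\rho_H$, so the product $X := \prod_{i=1}^n \mathcal{K}(J_i)$ is complete under any of the usual product metrics. On $X$ define the operator
$$F(A_1, \ldots, A_n)_i \;:=\; \bigcup \{T_{i,j}(A_j) : (i,j) \in G\}.$$
Condition (G3)(a) ensures $F(X) \subseteq X$: each image $T_{i,j}(A_j) \subseteq T_{i,j}(J_j) \subseteq J_i$ is compact, the union is finite, and (G2)(a) guarantees nonemptiness.

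Next, I would produce a metric on $X$ under which $F$ contracts. Combining the two standard Hausdorff-metric inequalities---namely $\rho_H(T(A),T(B)) \le t \cdot \rho_H(A,B)$ for a similarity $T$ of ratio $t$, and $\rho_H(\bigcup_\alpha A_\alpha, \bigcup_\alpha B_\alpha) \le \max_\alpha \rho_H(A_\alpha, B_\alpha)$---one obtains
$$\rho_H\bigl(F(A)_i, F(B)_i\bigr) \;\le\; \max_{(i,j) \in G} t_{i,j} \, \rho_H(A_j, B_j).$$
Equipping $X$ with the weighted max-metric $d((A_i),(B_i)) := \max_i w_i^{-1} \rho_H(A_i, B_i)$ for positive weights $w_1, \ldots, w_n$, it then suffices to exhibit weights $w_i > 0$ and a constant $c < 1$ such that $w_j \, t_{i,j} \le c\, w_i$ for every edge $(i,j) \in G$; with this in hand $F$ is a strict $c$-contraction on $(X,d)$.

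The key step, and the main obstacle, is producing such weights from the cycle-only hypothesis (G3)(b), since individual edge ratios $t_{i,j}$ may well exceed $1$. Viewing $T := [t_{i,j}]$ (with $t_{i,j} = 0$ on non-edges) as a weighted adjacency matrix, the hypothesis $\prod_k t_{i_k, i_{k+1}} < 1$ along every directed cycle is equivalent to the spectral radius of $T$ restricted to each strongly connected component of $G$ being strictly less than $1$. Perron--Frobenius then supplies a strictly positive eigenvector on each component with eigenvalue $< 1$, and a straightforward inductive rescaling along the DAG of strongly connected components---working backwards through the edges connecting distinct components---extends these to global weights satisfying the required inequality. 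A cleaner fallback, should the rescaling prove awkward, is to bound $d(F^N(A), F^N(B))$ directly by a sum over length-$N$ directed paths in $G$, observe via (G3)(b) that for $N$ sufficiently large every such path product is uniformly less than some $c < 1$, and apply Banach's theorem to $F^N$ (whose unique fixed point is then automatically fixed by $F$). Either route produces a unique $(K_1, \ldots, K_n) \in X$ satisfying the asserted system of identities.
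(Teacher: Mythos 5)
Your overall strategy (Hutchinson-type operator on a product of hyperspaces, contraction principle) is exactly the standard one; the paper itself offers no argument here beyond citing Mauldin--Williams \cite[Theorem 1]{MW88}, which in turn runs essentially the computation you describe. The setup is fine: completeness of $\prod_i \mathcal{K}(J_i)$, invariance of the operator $F$ via (G3)(a) and nonemptiness via (G2)(a), and the two Hausdorff-metric inequalities giving $\rho_H(F(A)_i,F(B)_i)\le \max_{(i,j)\in G} t_{i,j}\,\rho_H(A_j,B_j)$ are all correct.

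However, your primary route through Perron--Frobenius rests on a false claim: the condition that every directed cycle has ratio product $<1$ is \emph{not} equivalent to the spectral radius of $T=[t_{i,j}]$ being $<1$ on each strongly connected component. Take the complete directed graph on three vertices with every $t_{i,j}=0.9$: every simple cycle has product $0.81$ or $0.729$, yet $T=0.9(J-I)$ has spectral radius $1.8$. (Only the converse implication holds.) So the Perron eigenvector does not supply the weights with constant $c<1$. The weights you want do exist, but the correct source is a shortest-path/minimum-cycle-mean argument: set $b_{ij}=\log t_{ij}-\log c$ with $\log c$ equal to the maximum cycle mean of $\log t_{ij}$ (negative, since there are finitely many simple cycles, all with negative log-product), and take $u_i$ to be the maximal $b$-weight of a directed path starting at $i$; then $u_i\ge b_{ij}+u_j$ on every edge and $w_i=e^{u_i}$ works. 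Alternatively, your fallback is sound as stated and closes the gap on its own: any path of length $N$ decomposes into at least $(N-n+1)/n$ simple cycles plus a simple path of length $\le n-1$, so all length-$N$ path products are bounded by $M^{n-1}c_0^{(N-n+1)/n}\to 0$ (with $M=\max t_{i,j}$ and $c_0<1$ the largest simple-cycle product), making $F^N$ a contraction for large $N$; uniqueness for $F$ then follows since $F$ permutes the fixed points of $F^N$. With the fallback doing the work, the proof stands, but the Perron--Frobenius justification should be deleted or replaced.
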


\begin{proof} 
This is proved by Mauldin-Williams (\cite[Theorem 1, p. 812]{MW88}), using the results of Hutchinson \cite{Hut81}.
Note that the maps $T_{i,j}$ act in the reverse direction to that of  the edges of $G$.
\end{proof}

The {\em construction object} $K$ is then defined by 
\begin{equation} K = \bigcup_{i=1}^n K_i.
\end{equation}
The individual $K_j$ are the {\em construction sub-objects.}

Our definition of $p$-adic path sets will  
correspond to all possible construction sub-objects
 $K_v$ in the Mauldin-Williams construction.

Associated to the graph $G$ is an $n \times n$ {\emph construction matrix} $A = A(G)$ 
(with $n=|V(G)|$) given by
\begin{equation}
A=  A(G) := [t_{i,j}]_{1\le i,j \leq n},
\end{equation}
where $t_{i,j}$ is defined to be zero if $(i,j) \notin G$. Now for $\beta > 0$, set 
\[
A_\beta = [(t_{i,j})^{\beta}]_{1\le i, j \le n},
\]
and let 
\[
\Phi ( \beta):= \mbox{Spectral radius of }~ A_{\beta}.
\]
This is the largest non-negative eigenvalue of $A_{\beta}$, by the
Perron-Frobenius theorem.
Mauldin and Williams \cite[Theorem 2]{MW86} observe that 
for each construction matrix, one has 
\begin{enumerate}
\item
$\Phi(0) \ge 1$,
\item
 $\Phi(\beta)$ is a continuous, strictly decreasing
function of $\beta \ge 0$ 
\item
 $\lim_{\beta \to \infty} \Phi(\beta) =0.$
 \end{enumerate}
  It follows that there  is a unique value  $\alpha\ge 0$ such
that $\Phi(\alpha) =1$. They term this value  the {\em matrix  dimension}
of the matrix $A= A_G$.

Mauldin and Williams determine the Hausdorff dimension of the construction object $K$
and also of its individual sub-objects $K_{j}$. For the construction object $K$ it is
given as follows. 

\begin{prop} \label{prop23a}
For each geometric graph-directed construction, the Hausdorff dimension of $K$, the construction
object, is $\alpha$, where $\alpha$ is the matrix dimension of the construction matrix 
$A(G) = [ t_{i,j}]_{1\le i, j \le n}$, with $n=|V(G)|.$
That is, 
it is the unique value $\alpha \ge 0$ such that the spectral radius $\sigma(A_{\alpha})=1$,
where $A_{\beta} := [ (t_{i,j})^{\beta}]_{1 \le i,j \le n}$ for $\beta>0.$

\end{prop}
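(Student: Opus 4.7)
The plan is to prove $\dim_H(K) = \alpha$ by establishing matching upper and lower bounds, where $\alpha$ is the unique value satisfying $\sigma(A_\alpha) = 1$; existence and uniqueness follow immediately from the monotonicity and limit properties (1)--(3) of $\Phi(\beta)$ recorded in the excerpt. Let $\mathbf{p} = (p_1,\ldots,p_n)^T$ denote a strictly positive right Perron eigenvector of $A_\alpha$ with $A_\alpha \mathbf{p} = \mathbf{p}$; the Perron--Frobenius theorem supplies such a $\mathbf{p}$ once one notes that the connectedness hypothesis (G2)(b) together with (G2)(a) delivers enough irreducibility to ensure the spectral radius is attained on a positive eigenvector (one may restrict to a dominant communicating class if needed).

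For the upper bound, I would iterate the set equation $K_i = \bigcup_{(i,j) \in G} T_{i,j}(K_j)$ exactly $N$ times, producing a cover of $K_i$ indexed by directed paths $i = i_0 \to i_1 \to \cdots \to i_N$ of length $N$ in $G$:
$$K_i = \bigcup_{\text{paths}} \bigl(T_{i_0, i_1} \circ \cdots \circ T_{i_{N-1}, i_N}\bigr)(K_{i_N}).$$
Each cover element has diameter at most $\mathrm{diam}(J_{i_N}) \prod_{k=0}^{N-1} t_{i_k, i_{k+1}}$, which tends uniformly to $0$ by the cycle contraction condition (G3)(b). Summing the $\alpha$-th powers of these diameters and using $p_{i_N} \ge \min_j p_j > 0$, I would obtain
$$\sum_{\text{paths}} \Bigl(\prod_{k} t_{i_k,i_{k+1}}\Bigr)^\alpha \le \frac{1}{\min_j p_j} (A_\alpha^N \mathbf{p})_i = \frac{p_i}{\min_j p_j},$$
uniformly in $N$. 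Hence the $\alpha$-dimensional Hausdorff measure of each $K_i$ is finite, so $\dim_H K \le \alpha$.

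For the lower bound, I would construct a Borel probability measure $\mu$ on $K$ that is $\alpha$-Frostman, i.e.\ satisfies $\mu(B(x,r)) \le C r^\alpha$ for all $x \in K$ and small $r>0$. The natural candidate distributes mass along directed paths: define a consistent family $\{\mu_i\}$ of finite Borel measures on $K_i$ by the self-similarity rule
$$\mu_i(T_{i,j}(E)) = (t_{i,j})^\alpha\, \mu_j(E) \quad \text{for Borel } E \subset K_j,$$
with total masses $\mu_i(K_i) = p_i$; the eigenvalue equation $A_\alpha \mathbf{p} = \mathbf{p}$ makes this prescription self-consistent, and existence follows from a contraction-mapping argument in the product of probability measure spaces on the $J_i$, analogous to Proposition~\ref{pr212}. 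Setting $\mu = (\sum_i p_i)^{-1} \sum_i \mu_i$, I would verify the Frostman estimate as follows: for each $x \in K$ and small $r$, identify the finite collection of cylinder images $(T_{i_0 i_1}\circ\cdots\circ T_{i_{N-1} i_N})(J_{i_N})$ whose cumulative contraction ratio first drops below $r$ and that meet $B(x,r)$; bound their number using the nonoverlap hypothesis and the nonempty-interior condition on each $J_j$. The mass distribution principle (Frostman) then yields $\dim_H K \ge \alpha$.

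The principal obstacle is the Frostman ball estimate in the lower bound. The nonoverlap condition only controls cylinders of the same generation, whereas a given ball $B(x,r)$ typically meets cylinders at many different depths because contraction ratios vary along distinct cycles in $G$; the argument requires a stopping-time partition indexed by paths, chosen so each resulting cylinder has diameter comparable to $r$. The geometric core is then a packing estimate: each such cylinder contains a scaled copy of a fixed open set of Lebesgue volume $\gtrsim r^m$ inside a ball of radius $O(r)$ around $x$, so their number is bounded independently of $r$. This is where the full force of the geometric hypotheses (G1)--(G3), and in particular the nonempty-interior assumption on the $J_i$, is essential.
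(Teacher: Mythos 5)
The paper offers no proof of this proposition: it simply cites Theorem 3 of Mauldin--Williams \cite{MW88}. Your proposal is, in effect, a reconstruction of that theorem's proof, and the outline you give --- cover by length-$N$ path cylinders weighted by a Perron eigenvector for the upper bound, a self-similar measure with the ratios $(t_{i,j})^\alpha$ plus a stopping-time/volume-packing Frostman estimate for the lower bound --- is exactly the standard Hutchinson/Mauldin--Williams argument. You have also correctly located the genuinely delicate step (the packing count for $B(x,r)$ against cylinders of varying depth, which is where the nonoverlap and nonempty-interior hypotheses enter). So as a reconstruction of the cited result the approach is the right one.

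Two points need repair, both stemming from the fact that $G$ is only assumed connected as an undirected graph, not strongly connected, so $A_\alpha$ may be reducible and need not admit a \emph{strictly} positive eigenvector. First, your suggested fix of ``restricting to a dominant communicating class'' rescues the lower bound (you only need one positive-mass Frostman measure, supported on the sub-construction over a component $H$ with $\alpha_H=\alpha$), but it does not rescue the upper bound as written: there you must control the sum over \emph{all} length-$N$ paths, and the inequality $\sum_{\text{paths}}(\prod_k t_{i_k,i_{k+1}})^\alpha \le p_i/\min_j p_j$ collapses when $\min_j p_j=0$. The standard repair is to take any $\alpha'>\alpha$, note $\Phi(\alpha')<1$, and bound the path sum by $\mathbf{e}^T A_{\alpha'}^N\mathbf{e}\to 0$, giving $\mathcal{H}^{\alpha'}(K)=0$ for every $\alpha'>\alpha$ and hence $\dim_H K\le\alpha$. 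Second, and relatedly, your claim that $\mathcal{H}^\alpha(K_i)<\infty$ is false in general: the paper's own Proposition~\ref{prop23} records that the construction object has only positive $\sigma$-finite $\mathcal{H}^\alpha$ measure, with finiteness holding precisely when the components of maximal dimension are pairwise incomparable in the order $\preceq$. Neither issue affects the dimension formula itself once the $\alpha'>\alpha$ device is used, but as written the upper-bound step overclaims.
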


\begin{proof} This is Theorem 3 of Mauldin and Williams \cite{MW88}. 
\end{proof}

  Hausdorff dimension formulas for  construction sub-objects $K_v$ involve the strongly connected components
of the directed graph $G$, and use matrices which are  square submatrices of $A(G)$,
extracting specified 
rows and corresponding columns.
A {\em strongly connected component} of a directed graph $G$ is a maximal subgraph that is strongly
connected (i.e. each vertex in the component is reachable from each other vertex in it by a directed  path.)
We let {\em $SC(G)$} denote  the set of strongly connected components of the connected graph $G$. 
There is a natural  partial ordering on $SC(G)$
which sets  $H_1 \preceq H_2$ provided there is a directed path in $G$ from a vertex in $H_1$
to one in $H_2$.  We let $\alpha_H$ denote the matrix dimension of the square submatrix $A_H$ of 
the construction matrix $A_G$
corresponding to the strongly connected component $H$ of $G$.
\begin{prop} \label{prop23}
For each geometric graph-directed construction, the following hold.

(1) The Hausdorff dimension of $K$, the construction
object, is $\alpha$, where 
\[
\alpha = \max \{ \alpha_H | H \in SC(G)\}.
\]
 Furthermore the set
 $K$ has positive $\sigma$-finite $\mathcal{H}^\alpha$ measure. 
 
(2) The Hausdorff dimension of each construction sub-object $K_j$  is $\alpha_j$, where 
\[
\alpha_j := \max \{ \alpha_H | H \in C_j\},
\]
where $C_j$ is the set of strongly connected components of $G$ reachable from
vertex $j$.
 The sub-object $K_j$ has positive $\sigma$-finite $\alpha_j$-dimensional
 Hausdorff measure. This measure is finite if and only if
 $\{H \in C_j | \alpha_H = \alpha_j\}$ consists of (pairwise) incomparable elements
  in the partial order $\preceq$ on $SC(G)$.
\end{prop}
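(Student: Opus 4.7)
The plan is to combine Proposition \ref{prop23a} with a decomposition of the spectral radius of the nonnegative matrix $A_\beta$ along the strongly connected components of $G$, and then bootstrap the sub-object statement by restricting the construction to vertices reachable from $j$. First I would prove the dimension formula in part~(1). Reorder the vertices of $G$ according to a topological ordering of the condensation DAG. With this ordering $A_\beta$ becomes block upper triangular, with diagonal blocks $(A_\beta)_H$ indexed by $H \in SC(G)$. Since the spectrum of a block upper triangular matrix is the union of the spectra of its diagonal blocks, $\sigma(A_\beta) = \max_{H \in SC(G)} \sigma((A_\beta)_H)$. Each map $\beta \mapsto \sigma((A_\beta)_H)$ is continuous and strictly decreasing (by the observations preceding Proposition \ref{prop23a} applied to the sub-matrix $A_H$), so $\sigma((A_\beta)_H) \le 1$ iff $\beta \ge \alpha_H$. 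Taking the max over $H$ yields $\sigma(A_\beta) = 1$ precisely at $\beta = \max_H \alpha_H$, which by Proposition \ref{prop23a} equals the Hausdorff dimension of $K$.

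Next I would prove the sub-object dimension formula in part~(2). Iterating the set equation $K_i = \bigcup_{(i,j) \in G} T_{i,j}(K_j)$ shows that each point of $K_j$ is coded by an infinite directed path in $G$ starting at $j$. Consequently only the vertices reachable from $j$ contribute to the construction of $K_j$. Restricting the graph-directed construction to the induced subgraph on $C_j$ (with the same similarities $T_{i,k}$) produces a graph-directed construction whose construction object is the union of the original $K_i$ for $i$ reachable from $j$, and in which $K_j$ plays the role of the full construction object via the restricted graph rooted at $j$. Applying part~(1) to this restricted system gives $d_H(K_j) = \max\{\alpha_H : H \in C_j\}$.

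For the Hausdorff measure claims, I would proceed in two steps. For positivity of $\mathcal{H}^{\alpha_j}(K_j)$, construct a graph-directed self-similar Bernoulli-type measure on $K_j$ whose cylinder masses are controlled by the Perron--Frobenius eigenvector of a strongly connected block achieving dimension $\alpha_j$; combined with the nonoverlapping (open set) condition, the mass distribution principle then gives $\mathcal{H}^{\alpha_j}(K_j) > 0$. For $\sigma$-finiteness, decompose $K_j$ as a countable union of images $T_{i_0,i_1} \circ \cdots \circ T_{i_{k-1},i_k}(K_{i_k})$ taken over finite admissible paths in $G$ from $j$ that first enter a strongly connected component $H$ with $\alpha_H = \alpha_j$; each such image has finite $\mathcal{H}^{\alpha_j}$ measure via a standard cover estimate using the contraction ratios and the Perron eigenvector.

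The finiteness dichotomy is the main obstacle. If the maximal-dimension components in $C_j$ are pairwise incomparable in $\preceq$, then the decomposition above produces only finitely many top-dimensional pieces (up to lower-dimensional remainders that contribute zero $\mathcal{H}^{\alpha_j}$ measure), yielding $\mathcal{H}^{\alpha_j}(K_j) < \infty$. Conversely, if $H_1 \prec H_2$ are two such components with $\alpha_{H_1} = \alpha_{H_2} = \alpha_j$, then one can concatenate arbitrarily long directed paths from $H_1$ to $H_2$, each producing a non-overlapping similar copy of a set of dimension $\alpha_j$ inside $K_j$, forcing $\mathcal{H}^{\alpha_j}(K_j) = \infty$. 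The hardest part is carefully verifying that the nonoverlapping condition survives all these iterated compositions so that mass estimates do not double-count and that the Bernoulli measure is genuinely $\alpha_j$-dimensional; this is exactly where Hutchinson's contraction-mapping framework on spaces of measures, adapted to graph-directed systems by Mauldin and Williams, does the heavy lifting.
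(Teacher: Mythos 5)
The paper does not actually prove this proposition: its entire proof is the citation ``This statement combines Theorems 4 and 5 of Mauldin--Williams \cite{MW88},'' so the result is imported, not established. Your proposal therefore takes a genuinely different route by attempting a self-contained derivation. The part that derives the dimension formulas from Proposition \ref{prop23a} is sound: ordering the vertices compatibly with the condensation DAG makes $A_\beta$ block upper triangular, so $\sigma(A_\beta)=\max_{H}\sigma((A_\beta)_H)$, and monotonicity of each $\Phi_H$ pins the matrix dimension at $\max_H \alpha_H$ (with the minor caveat that a trivial strongly connected component --- a single vertex with no self-loop --- has nilpotent block, so $\alpha_H$ must be assigned the value $0$ by convention rather than by solving $\Phi_H(\alpha_H)=1$). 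For part (2) one step should be made explicit: the restricted construction on the vertices reachable from $j$ has construction object $\bigcup_i K_i$ over \emph{all} reachable $i$, not $K_j$ alone; you need the additional observation that $K_j$ contains a similar copy of every reachable $K_i$ (compose the maps $T_{i_0,i_1}\circ\cdots\circ T_{i_{k-1},i_k}$ along a path from $j$ to $i$), so that $d_H(K_j)$ is at least, hence equal to, the dimension of that union.

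The measure-theoretic assertions are where the proposal stops short of a proof. Positivity of $\mathcal{H}^{\alpha_j}(K_j)$, $\sigma$-finiteness, and especially the finiteness dichotomy are precisely the content of Mauldin--Williams' Theorems 4 and 5. Your sketch names the right tools --- Perron--Frobenius eigenvector measures, the mass distribution principle, and divergence of the $\alpha$-weighted path sum when two maximal components are comparable --- but does not carry out the estimates. In particular, for the ``comparable implies infinite measure'' direction you must show that $\sum_\gamma t_\gamma^{\alpha_j}$ diverges over admissible paths from $H_1$ into $H_2$, which rests on $\sigma\bigl((A_{\alpha_j})_{H_1}\bigr)=1$; and for the converse you must verify that the countably many lower-dimensional remainders contribute only a $\sigma$-finite amount. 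None of this is routine, which is presumably why the paper cites it rather than proves it. As written, your proposal is a correct proof of the dimension formulas and an outline, not a proof, of the measure statements.
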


\begin{proof} This statement combines Theorems 4 and 5 of Mauldin-Williams (\cite[p. 814, p. 824]{MW88}).
\end{proof}

%
%
%

\subsection{$p$-adic   graph-directed constructions} \label{sec22}

We formulate a  $p$-adic variant of the Mauldin-Williams construction
inside the compact set $\ZZ_p$ as follows.

\begin{defn}\label{def22}
A {\em (restricted) $p$-adic graph-directed construction} on
the $p$-adic integers  $\ZZ_p$ consists of
\begin{enumerate}
\item[(P1)]
 a finite sequence of (identical) initial sets $J_i = \ZZ_p$, for $1 \le i \le n$;
 these sets overlap.
 \item[(P2)]
  a  finite directed labeled graph $\sG=(G, V, \sE)$ with vertex set $V$
  consisting of the integers  $1, 2..., n$,  with $\sE \subset V \times V \times \sA$, 
 with each labeled edge  assigned data $(i(e), f(e), j_e)$ in which $i(e), f(e) \in V$
denote the initial and final vertices of the directed edge, and the label
$j_e \in \sA=\{ 0, 1, ..., p-1\}$ is drawn from the  usual alphabet of $p$-adic digits.
No  two edges have the same data $(i,f, j)$. Each vertex of the
underlying directed graph $G$ has at least one exit edge.
\item[(P3)]
 to the label $j_e$ is associated a $p$-adic
similarity  map $\phi: \ZZ_p \to \ZZ_p$  given by
$$
\phi_e(y) = py+ j_e.
$$
This  is a contractive mapping in the $p$-adic metric.
\end{enumerate}
\end{defn}
\noindent Note that in  this construction  the similarity maps $\phi_{e}$ in (P3)
will be applied to  sets  in the  direction {\em reverse} to that
assigned to the directed graph edge $e$ of $G$, compare \eqref{eq221} below.

 This definition  differs from the   Mauldin-Williams real number 
 graph-directed construction in several ways.
Firstly, in  condition (P1) it  starts with initial sets $J_i$ that have overlaps, which  is forbidden
in (G1) of the Mauldin-Williams construction.  
Secondly, in  condition (P2)  the  underlying directed graph $G$ (ignoring labels) is permitted to have 
loops  ($i(e)= t(e)$) and multiple edges (having same $i(e), v(e))$, which are forbidden in (G2).
(Mauldin-Williams forbid these conditions in order to handle maps having different contraction
rations $t_{i,j}$ on different edges.)
Thirdly, condition (P3) requires that all contraction ratios $t_{i,j}$  
be equal, which is a narrower condition 
 than the Mauldin-Williams condition (G3).
Furthermore  Condition (P3) implies 
 that analogues of conditions (G3) (a), (b) automatically hold,  aside from
 the non-overlapping condition:
\begin{enumerate}
\item[(a)]
The initial sets $J_j=\ZZ_p$  satisfy  the condition 
$$
J_j \supseteq \bigcup \{ \phi_{e}(J_{f(e)} ):  e=(i(e), f(e))~~\mbox{has initial vertex}  ~i(e)=j \}.
$$
\item[(b)]
Each  map $\phi_e$ is has $p$-adic  contraction ratio $t_{e} := |p|_p = \frac{1}{p} < 1$.
Thus for   $[e_1, ..., e_q, e_{q+1}=e_1] $  a directed cycle of edges in $G$,  the contracting cycle condition holds:
\[
 \prod_{j=1}^q  t_{I(e_j), f(e_j)} < 1,
 \] 
\end{enumerate}

The following result gives existence and uniqueness for the $p$-adic construction.

\begin{thm}\label{pr221} {\em ($p$-adic Geometric Graph-Directed Construction)}
Let $\sG=(G, V, \sE)$ be a 
connected labeled graph with vertices $V = \{ 1, 2, ..., n\}$,
and with edge label alphabet
$\sA= \{ 0, 1, 2, ..., p-1\}$.
Then there exist unique nonempty compact sets $\{K_i: i \in V\}$, each contained in $\ZZ_p$,
that satisfy the set-valued functional relations, for each vertex $i \in V$, 
\begin{equation} \label{eq221}
K_i = \bigcup_{ \{ e: i(e) = i\} }\phi_e( K_{f(e)} ).   
\end{equation}
\end{thm}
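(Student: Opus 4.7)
The plan is to prove this by a contraction mapping argument on a product space of nonempty compact sets, exactly mirroring the Hutchinson/Mauldin--Williams approach cited in the proof of Proposition~\ref{pr212}, but carried out in the $p$-adic metric. The only features of $\ZZ_p$ I will use are that it is a complete ultrametric space, that every $\phi_e(y)=py+j_e$ contracts $p$-adic distances by the exact factor $1/p$, and that each vertex has at least one exit edge (from (P2)).

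First, let $\sK(\ZZ_p)$ denote the space of nonempty compact subsets of $\ZZ_p$, equipped with the Hausdorff metric $\rho_H$ induced by $|\cdot|_p$; this is a complete metric space. Form the product space $\sK(\ZZ_p)^n$, where $n=|V|$, with metric
\[
\rho\bigl((K_1,\ldots,K_n),(L_1,\ldots,L_n)\bigr) := \max_{1 \le i \le n} \rho_H(K_i, L_i),
\]
which is again complete. Define an operator $F:\sK(\ZZ_p)^n \to \sK(\ZZ_p)^n$ by
\[
F(K_1,\ldots,K_n)_i := \bigcup_{\{e:\, i(e)=i\}} \phi_e(K_{f(e)}).
\]
This is well-defined: each $\phi_e$ is continuous, so $\phi_e(K_{f(e)})$ is compact; the union is over a nonempty finite set of edges (by (P2)), giving a nonempty compact set.

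Next I would verify that $F$ is a contraction with ratio $1/p$. Two ingredients suffice. Ingredient one: for each edge $e$ and $x,y\in\ZZ_p$,
\[
|\phi_e(x)-\phi_e(y)|_p = |p(x-y)|_p = \tfrac{1}{p}|x-y|_p,
\]
so $\rho_H(\phi_e(K),\phi_e(L)) = \tfrac{1}{p}\rho_H(K,L)$ for all $K,L \in \sK(\ZZ_p)$. Ingredient two: for any finite families of nonempty compacts $\{A_e\},\{B_e\}$ indexed by the same edge set,
\[
\rho_H\Bigl(\bigcup_e A_e,\, \bigcup_e B_e\Bigr) \le \max_e \rho_H(A_e,B_e).
\]
Combining the two gives $\rho_H(F(\mathbf{K})_i, F(\mathbf{L})_i) \le \tfrac{1}{p}\max_e \rho_H(K_{f(e)},L_{f(e)}) \le \tfrac{1}{p}\rho(\mathbf{K},\mathbf{L})$, hence $\rho(F(\mathbf{K}),F(\mathbf{L})) \le \tfrac{1}{p}\rho(\mathbf{K},\mathbf{L})$.

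Finally, apply the Banach fixed point theorem to obtain a unique $(K_1,\ldots,K_n) \in \sK(\ZZ_p)^n$ with $F(K_1,\ldots,K_n) = (K_1,\ldots,K_n)$; this is precisely the system \eqref{eq221}. There is essentially no obstacle here: the condition (P3) makes every $\phi_e$ a uniform contraction with the \emph{same} ratio $1/p$, so the usual cycle-contraction bookkeeping of Mauldin--Williams collapses to a single bound, and the overlap of the starting sets $J_i=\ZZ_p$ in (P1) is irrelevant to existence and uniqueness (it only matters later for the dimension theory). The only mild care needed is to confirm that each coordinate of $F(\mathbf{K})$ is nonempty, which is guaranteed by the exit-edge hypothesis in (P2).
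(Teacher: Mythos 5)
Your proof is correct and is essentially the paper's argument: the paper simply cites Hutchinson's theorem (whose proof is exactly the contraction-mapping argument on $\sK(\ZZ_p)^n$ that you write out), and then notes that the fixed point can be realized as the decreasing limit of the iteration started from $K_i^{(0)}=\ZZ_p$. Your explicit verification that each $\phi_e$ contracts by exactly $1/p$ and that the exit-edge hypothesis of (P2) keeps every coordinate nonempty fills in precisely the details the paper leaves to the citation.
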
 

\begin{proof}
This existence and uniqueness of  the
compact set-valued fixed point (\ref{eq221}) follow from Hutchinson \cite[Theorem 3.1]{Hut81}.
The Hutchison proof establishes that the $K_i$ are obtained by the following iterative process.
We start with initial sets $K_i^{(0)} = \ZZ_p$, and iteratively define, for each vertex $i \in V$, 
$$
K_i^{(k+1)} := \bigcup_{ \{ e: i(e) = i\}} \phi_e( K_{f(e)}^{(k)} ),   
$$
We obtain a  sequence of closed sets
$$
K_i^{(0)} \supseteq K_i^{(1)} \supseteq K_i^{(2)} \supset \cdots,
$$
and these converge downwards to the compact sets $K_i,$ as $n \to \infty$.
\end{proof}
\begin{defn} \label{de26}
Any set  $Y:=K_i$ for some sub-object $K_i \subseteq \ZZ_p$ in
a (restricted) $p$-adic graph directed system is called
  a {\em geometric $p$-adic path set fractal}. We denote the
  set of all such $Y \subseteq \ZZ_p$ as $\sC_{G}(\ZZ_p)$.
 \end{defn}

In Theorem~\ref{self-sim} below we will show that this definition gives exactly the same class of
sets  as those defined  in the introduction, i.e. $\sC_{G}(\ZZ_p)= \sC(\ZZ_p)$.

Many different construction pairs $(\sG, v)$ can produce the same geometric $p$-adic path set fractal $Y$.
One can use this freedom to make good choices for $\sG$. For example, one may  require $\sG$ to
be right-resolving, and in addition to have at most one directed edge between any directed pair of
vertices, see the proof of Theorem ~\ref{th31a}(1) in Section \ref{sec3}.

%
%
%

\subsection{Path sets and $p$-adic path set fractals} \label{sec23}

We now show that geometric  $p$-adic path set fractals in Section \ref{sec22}
comprise exactly the same sets as the  images in $\ZZ_p$
of abstract path sets under a symbol labeling.
 We recall a  formal definition of {\em path set}  given in  \cite{AL12a}.
 Let $ \mathcal{A}^\mathbb{N}$ be the full one-sided shift space on $\mathcal{A}$. 
A \emph{pointed graph} 
over an alphabet $\mathcal{A}$ consists of a pair $(\mathcal{G},v)$, where $\mathcal{G}=(G, \sE)$ is a finite edge-labeled 
directed graph $G$, with labeled edges $\sE \subset E \times \sA$ 
having labels drawn from an alphabet $\sA$, 
 and $v$ a vertex of $G$. We let $V(\sG)$ and $E(\sG)$
 denote the set of vertices and directed edges of $G$, respectively.
  Following \cite{AL12a} we make a basic definition.

%
\begin{defn} \label{de211}
 For a pointed graph $(\sG, v)$ its associated
 (abstract) \emph{path set} 
 (or {\em pointed follower set}) 
   $ \sP =X_\mathcal{G}(v) \subset \sA^{\NN}$ is
the set of  all infinite one-sided symbol sequences
giving the successive labels of all one-sided infinite walks in $\mathcal{G}$ 
issuing from the distinguished vertex $v$. 
Many different $(\mathcal{G}, v)$ may give the same path set $\sP$, and we call any such
$(\mathcal{G},v)$  a \emph{presentation} of $\sP$. 
\end{defn}

Recall that  the class $\sC(\ZZ_p)$ of $p$-adic path set fractals consists of  images
$f_P(\sP) = f_p(X_{\sG}(v))$ of a path set $\sP$ under a 
digit assignment map $\bar{f}_p$, sending a path address to a $p$-adic expansion.
We show that the class $\sC(\ZZ_p)$ agrees with the
geometric class $\sC_{G}(\ZZ_p)$ given by the geometric Mauldin-Williams
construction. 
For this purpose it is helpful to know that every element of $\sC(\ZZ_p)$ has
presentation $Y := (\bar{f}_p, \sG, v)$ 
of the special form called a standard presentation in Section \ref{sec11}.


\begin{prop}\label{pr30} { \em (Standard presentation)}

(1) Every  path set $\sP = X_{\sG}(v)$ on an alphabet $\sA$ has a presentation $(\sG, v)$ that is 
right-resolving and reachable.

(2) Every $p$-adic path set fractal $Y$ in $\sC(\ZZ_p)$ has a presentation
$Y= (\bar{f}_p, \sG, v)$ that is injective, right-resolving and reachable; that is, 
a standard presentation.  Furthermore
 one may specify that  the presentation  alphabet is $\sA =\{0, 1,...,p-1\}$ with
the identity digit assignment map $\bar{i}_p$.
\end{prop}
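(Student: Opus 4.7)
For part (1), the plan is to apply a variant of the classical subset (powerset) construction from automata theory. Given a pointed graph $(\sG,v)$ presenting $\sP = X_{\sG}(v)$, I define a new pointed graph $(\sG', v')$ whose vertex set consists of the nonempty subsets $S \subseteq V(\sG)$, with a single edge labeled $a \in \sA$ drawn from $S$ to
\[
S_a := \{ w \in V(\sG) : \text{there is an edge from some } u \in S \text{ to } w \text{ labeled } a \text{ in } \sG \}
\]
whenever $S_a \neq \emptyset$. Set $v' := \{v\}$ and keep only vertices reachable from $v'$. The resulting graph is reachable by construction and right-resolving because, for each $S$ and each $a$, there is at most one $a$-labeled edge exiting $S$. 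A straightforward induction shows that a symbol sequence $(a_0, a_1, \ldots) \in \sA^{\NN}$ labels an infinite walk in $\sG'$ issuing from $v'$ iff the iteratively defined sets $S^{(0)} := \{v\}$ and $S^{(k+1)} := (S^{(k)})_{a_k}$ are all nonempty, which is exactly the condition for it to label an infinite walk in $\sG$ issuing from $v$. Hence $X_{\sG'}(v') = X_{\sG}(v)$. This argument is the content of \cite[Theorem 3.2]{AL12a}, which may be cited directly.

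For part (2), start from any presentation $Y = f_p(X_{\sG}(v))$ with digit assignment $\bar{f}_p : \sA \to \{0, 1, \ldots, p-1\}$. First replace each edge label $a$ of $\sG$ by $\bar{f}_p(a)$ to obtain a new pointed graph $(\tilde{\sG}, v)$ over the $p$-adic digit alphabet $\{0, 1, \ldots, p-1\}$. Directly from the definitions one has
\[
Y = f_p(X_{\sG}(v)) = i_p(X_{\tilde{\sG}}(v)),
\]
where $i_p$ is the identity digit assignment map on $\{0, 1, \ldots, p-1\}^{\NN}$. Now apply part (1) to the (possibly non-right-resolving) presentation $(\tilde{\sG}, v)$ to obtain a right-resolving reachable presentation $(\sG'', v'')$ of $X_{\tilde{\sG}}(v)$ on the same digit alphabet. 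Since $\bar{i}_p$ is trivially one-to-one, the triple $(\bar{i}_p, \sG'', v'')$ is the required standard presentation of $Y$.

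The only subtle point, and the main thing to watch in the argument, is the order of operations: relabeling via the (possibly non-injective) digit assignment $\bar{f}_p$ can destroy right-resolvingness, so the subset construction of part (1) must be performed \emph{after} relabeling to the digit alphabet, not before. Otherwise the verifications are routine bookkeeping exercises in tracing label sequences through sequences of subsets of $V(\sG)$, as indicated in the first paragraph.
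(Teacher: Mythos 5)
Your proposal is correct and follows essentially the same route as the paper: part (1) is handled by the standard subset construction (the paper simply cites \cite[Theorem 3.2]{AL12a} for this), and part (2) relabels the edges via $\bar{f}_p$ to the digit alphabet $\{0,1,\ldots,p-1\}$ first and only then invokes part (1), exactly as in the paper's proof. Your explicit remark about the order of operations (relabel before making the presentation right-resolving) is the same subtlety the paper's argument implicitly relies on.
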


\begin{proof}
(1) Any path set $X$ on any alphabet $\sA$ has a  right-resolving, reachable  presentation $\sP=(\sG, v)$
on this alphabet, with
$X= X_{\sG}(v)$, by a standard construction, see \cite[Theorem  3.2]{AL12a}. 

(2)  By hypothesis $Y  \in \sC(\ZZ_p) $ comes with a  presentation  $Y = f_p(X_{\sG'}(v'))$,
on an alphabet $\sA'$. Using the digit assignment map $\bar{f}_p: \sA' \to \{0, 1, ..., p-1\}$, we may 
 relabel the underlying edges of the graph of ${\sG'}$ by the image labels in  $\sA =\{0, 1, .., p-1\}$,
 obtaining a labeled directed graph $\sG''$ with 
  $Y= i_p(X_{\sG''}(v'))$. 
By (1)  the path set $X_{\sG''}(v)$ has another presentation  $X_{\sG}(v)$,
in which the new  graph $\sG$ is right-resolving and reachable, 
and uses  the same label alphabet $\sA$.
By inspection  $Y := (\bar{i}_p, \sG, v')$ is still a 
presentation of $Y$, and it  is injective, right-resolving and reachable.
\end{proof}

Note  the  one-sided shift space $\sA^{\NN}$, for $\sA= \{0,1 ,..., p-1\}$,
topologized with the product topology,  is  homeomorphic to $\ZZ_p$
with its usual $p$-adic topology, where one identifies a  symbol sequence
$\alpha_0 \alpha_1 \alpha_2 \cdots  \in \sA^{\NN}$ with the $p$-adic expansion
$$
x = \sum_{j=0}^{\infty} \alpha_j p^j = (\cdots \alpha_2 \alpha_1 \alpha_0)_p \in \ZZ_p.
$$
This is exactly the map $i_p$ underlying
the standard presentation $Y = (\bar{i}_p, \sG, v)$ of a $p$-adic path set fractal above.

Now we relate the  class $\sC(\ZZ_p)$ 
to the geometric class $\sC_{G}(\ZZ_p)$.
To this end we note  that attached to any labeled directed graph $\sG=(G, V, \sE) $ on
alphabet $\sA=\{0,1 , ..., p-1\}$ there is associated a (restricted) $p$-adic graph-directed construction,
(as in Section \ref{sec22})  based on the same graph data $\sG=(G, V, \sE) $
where the edge label $j \in \sA$ is now assigned the map
$\phi_j(x) = px +j.$

%
\begin{thm}\label{self-sim}
There holds $\sC_{G}(\ZZ_p) = \sC(\ZZ_p)$. Specifically:

(1)  Let $K_v \in \sC_{G}(\ZZ_p)$ be a construction sub-object of
a (restricted) $p$-adic graph directed
construction with data $\sG = (G, V, \sE)$
using edge maps  $\phi_{e} = px+ j_{e}$ with $0 \le j_e\le p-1$,
associated to vertex $v$ of $G$.
Create  a path
set from the same data $\sG(G, V, \sE)$, interpreting the
edge labels $j_e \in \sA =\{0,1, ..., p-1\}$,
and let $Y:= i_p( X_{\sG}(v))$
be the $p$-adic path set fractal with presentation $Y:= (\bar{i}_p, \sG, v)$.
Then $Y = K_v.$
It follows that 
  $\sC_{G}(\ZZ_p) \subset \sC(\ZZ_p)$.

(2) Let $Y \in \sC(\ZZ_p)$ be any $p$-adic path set fractal. Then it  has a standard
presentation $Y= (\bar{i}_p, \sG, v)$ with a labeled directed graph $\sG= (G, V, \sE)$,
having the additional property that   all vertices of the graph 
$G$ have at least one exit edge.
By (1) there is  an associated (restricted)
$p$-adic graph directed construction having 
a  sub-object $K_v$, with $Y=K_v$. Thus
 $\sC(\ZZ_p) \subset \sC_{G}(\ZZ_p)$.
\end{thm}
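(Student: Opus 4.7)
The plan is to reduce both inclusions to the uniqueness clause of Theorem~\ref{pr221}. For part (1), I will show that the family $Y_w := i_p(X_{\sG}(w))$, indexed by $w \in V$, solves the same set-valued fixed-point equation \eqref{eq221} as $\{K_w\}_{w \in V}$; uniqueness will then force $Y_v = K_v$. For part (2), I will promote the standard presentation furnished by Proposition~\ref{pr30} to one satisfying the extra hypotheses of Definition~\ref{def22}, and invoke (1).

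\textbf{Executing part (1).} I would begin from the observation that an infinite label sequence in $X_{\sG}(v)$ decomposes uniquely as the label $j_e$ of an initial edge $e$ with $i(e) = v$, prepended to a label sequence in $X_{\sG}(f(e))$. Under $i_p$ this prepending becomes the map $\phi_e$, since
\[
i_p(j_e, j_{e_1}, j_{e_2}, \ldots) \;=\; j_e + p\cdot i_p(j_{e_1}, j_{e_2}, \ldots) \;=\; \phi_e\!\bigl(i_p(j_{e_1}, j_{e_2}, \ldots)\bigr).
\]
Taking unions over all exit edges at $v$ then yields
\[
Y_v \;=\; \bigcup_{\{e :\, i(e) = v\}} \phi_e(Y_{f(e)}),
\]
which is exactly the system \eqref{eq221}. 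Each $Y_w$ is nonempty, because hypothesis (P2) guarantees every vertex has an exit edge and hence infinite outgoing paths exist, and compact as the continuous image of the compact set $X_{\sG}(w) \subset \sA^{\NN}$. The uniqueness assertion of Theorem~\ref{pr221} would then give $Y_v = K_v$ simultaneously for all $v$, proving $\sC_G(\ZZ_p) \subseteq \sC(\ZZ_p)$.

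\textbf{Executing part (2).} Proposition~\ref{pr30}(2) will supply an injective, right-resolving, reachable presentation $Y = i_p(X_{\sG'}(v))$ on alphabet $\{0,1,\ldots,p-1\}$ with identity digit assignment. The only missing ingredient for Definition~\ref{def22} is that every vertex of $\sG'$ must have an out-edge. I would remedy this by iterative pruning: delete every vertex with no outgoing edge together with all edges entering it, and repeat until stable; the process terminates since $V$ is finite. Any vertex eliminated lies only on finite paths from $v$ and therefore contributes no element to $X_{\sG'}(v)$, so the path set (and hence $Y$) is preserved. The pruned graph $\sG$ remains right-resolving, injectively labeled, and reachable from $v$, and now every surviving vertex carries an out-edge. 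Applying part (1) to $\sG$ exhibits $Y$ as the construction sub-object $K_v$ of the associated $p$-adic graph-directed system, giving $Y \in \sC_G(\ZZ_p)$.

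\textbf{Main obstacle.} The conceptual heart of the argument is the identity $i_p \circ (\text{prepend } j_e) = \phi_e \circ i_p$ at the symbol level, which reduces both inclusions to uniqueness of a Hutchinson-type fixed point; this is the step I would verify most carefully. The only technical wrinkle is the pruning in part (2): a single sweep of dead-end deletion can create new dead-ends, so iteration to stability is essential, but this is elementary graph combinatorics and should not obstruct the argument.
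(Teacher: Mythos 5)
Your proposal is correct, and part (1) takes a genuinely different route from the paper. The paper proves $K_v = i_p(X_{\sG}(v))$ by unwinding the Hutchinson iteration: it shows by induction on $k$ that the $k$-th stage $K_i^{(k)}$ is exactly the union of the cylinders $(\alpha_0 + \alpha_1 p + \cdots + \alpha_{k-1}p^{k-1}) + p^k\ZZ_p$ over length-$k$ walks from $i$, and then passes to the limit. You instead verify that the candidate family $Y_w = i_p(X_{\sG}(w))$ satisfies the fixed-point system \eqref{eq221} directly, via the conjugation identity $i_p \circ (\text{prepend } j_e) = \phi_e \circ i_p$, and then invoke the uniqueness clause of Theorem~\ref{pr221}. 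Your route is shorter and cleaner, at the cost of needing the (true, and stated in the introduction) fact that path sets are compact in $\sA^{\NN}$ so that uniqueness applies; the paper's explicit cylinder computation has the side benefit of exhibiting the $p$-adic ball structure of the approximants, which is reused almost verbatim in the proof of Theorem~\ref{th31a}. One small imprecision: the decomposition of an infinite label sequence into ``initial edge plus tail'' need not be \emph{unique} when $\sG$ is not right-resolving (two exit edges at $v$ may carry the same label), but the set-level union identity you actually use holds regardless, so nothing breaks. Part (2), including the observation that dead-end pruning must be iterated to stability, matches the paper's argument.
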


\begin{proof} 
(1) The correspondence between $K_v$ and $Y_v$
proceeds by relating  paths to the address labels of points in the 
graph-directed fractal, compare Edgar \cite[Section 4.3]{Edg08}.
We study the set-valued  iteration $K_i^{(k)}$ given in 
 Theorem  \ref{pr221}
 for the geometric $p$-adic path set fractal determined by $\sG=(G, V, \sE)$.
 We prove by induction on $k \ge 0$ that for all vertices $i$ 
 $$
 K_i^{(k)}: =\bigcup  \Big( (\alpha_0 + \alpha_1 p + \cdots +\alpha_{k-1} p^{k-1})+ p^k \ZZ_p\Big)
 $$
 where the set union is taken over label sequences 
$ (\alpha_0, ..., \alpha_{k-1})$ of legal walks in the directed graph $\sG$
of length $k$ starting from vertex $i$. (The edge leaving the initial vertex $i$ has label $\alpha_0$.)
 The hypothesis that an exit edge exists
from each vertex guarantees that all paths extend one step. The base case $k=0$
holds since all $K_i^{(0)} = \ZZ_p$. For the induction step, we have
$K_i^{(k+1)}$ is comprised of sets
$$
\phi_{e} (K_{f(e)}^{(k)}) = j_e + \sum_{i=0}^{k-1} \alpha_i p^{i+1} + p^{k+1}\ZZ_p
$$
where $(\alpha_0, ..., \alpha_{k-1})$ are labels from a directed walk in $\sG$ starting
from vertex $f(e)$. But now $(j_e, \alpha_0, ..., \alpha_{k-1})$ are vertices of a directed
walk of length $k+1$ starting from vertex $i$, and all such walks are enumerated this
way. This completes the induction step. Letting $k \to \infty$, for each vertex $i$ these sets decrease
to $K_i$, which is now identified with all infinite walks in $\sG$ starting from vertex $i$. Choosing
$i$ to be the original marked vertex, we obtain $K_i = \bar{i}_p (X_{\sG}(v)) = Y_v$, as asserted.

(2) Given a $p$-adic path set fractal $Y$ we take a standard form presentation 
$Y= (\bar{i}_p, \sG', v)$.  We now prune the graph $\sG'$ to remove any vertices with
no exit edges,  since such vertices contribute no infinite paths to the path set $\sP= X_{\sG}(v)$, and 
leave $Y$ unchanged. The new graph may still have vertices with no
exit edges, but by repeating  this operation a finite number of times, we will arrive at a 
presentation
$Y=(\bar{i}_p, \sG, v)$ in which all vertices have at least one exit edge. The right-resolving
and reachability properties are unaffected, so the new presentation is still standard.
The construction of part (1) now applies to give the result.
\end{proof}

It is known  that the class of  path sets on a fixed alphabet is closed
under finite unions and intersections ( \cite[Theorem 1.1]{AL12a}).
Theorem ~\ref{self-sim} implies 
that the collection of  $p$-adic path set fractals $\sC(\ZZ_p)$ is 
closed under set union and intersection as well.

%
%
%

\subsection{One element $p$-adic path set fractals}\label{sec24a}

We characterize path sets consisting of a single element.

%
\begin{thm}\label{th15} {\em (Single element $p$-adic path set fractals)}
The  $p$-adic path set fractals $Y \in \sC(\ZZ_p)$ that consist of a single element
are exactly those $Y= \{ r\}$  for which $r$ is a $p$-integral rational number, i.e.
$r \in \QQ\cap \ZZ_p$.
\end{thm}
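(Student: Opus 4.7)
The plan is to establish the two directions separately, using the standard fact that a $p$-adic integer lies in $\QQ \cap \ZZ_p$ if and only if its $p$-adic expansion is eventually periodic.

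For the direction that every singleton $\{r\}$ with $r \in \QQ \cap \ZZ_p$ lies in $\sC(\ZZ_p)$, I would first recall (or give a short argument for) the standard fact that a $p$-integral rational has an eventually periodic $p$-adic expansion
\[
r = (\cdots \beta_2 \beta_1 \beta_0)_p,
\]
with preperiod of some length $n \ge 0$ and period of some length $k \ge 1$. Then I would exhibit $\{r\}$ as a $p$-adic path set fractal by constructing a ``lollipop'' pointed graph $(\sG, v)$ on the alphabet $\sA = \{0,1,\dots,p-1\}$: a directed tail $v = v_0 \to v_1 \to \cdots \to v_n$ whose edges carry the labels $\beta_0, \beta_1, \dots, \beta_{n-1}$ of the preperiod, followed by a directed cycle $v_n \to v_{n+1} \to \cdots \to v_{n+k-1} \to v_n$ whose edges carry the labels of one period $\beta_n, \beta_{n+1}, \dots, \beta_{n+k-1}$. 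By construction, every vertex has a unique exit edge, so there is exactly one infinite walk from $v$, and its label sequence is the $p$-adic expansion of $r$. Hence $\{r\} = \bar{i}_p(X_\sG(v)) \in \sC(\ZZ_p)$.

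For the converse, suppose $Y = \{y\} \in \sC(\ZZ_p)$ consists of a single element. By Proposition~\ref{pr30}, $Y$ has a standard presentation $Y = (\bar{i}_p, \sG, v)$, which is injective, right-resolving, and reachable, and (after pruning as in the proof of Theorem~\ref{self-sim}(2)) we may assume every vertex of $\sG$ has at least one exit edge. Since $|Y| = 1$ and the presentation is right-resolving, distinct infinite walks from $v$ have distinct label sequences, so in fact there is exactly one infinite walk $v = v_0 \to v_1 \to v_2 \to \cdots$ from $v$. I would then argue by pigeonhole that some vertex $w$ is visited at two times $t_1 < t_2$, i.e.\ $v_{t_1} = v_{t_2} = w$. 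The key splicing step: the suffix walk from time $t_1$ and the suffix walk from time $t_2$ are both infinite walks starting at $w$; if they disagreed, then replacing the tail of the unique walk from $v$ starting at time $t_1$ with the continuation beginning at time $t_2$ would produce a second infinite walk from $v$, contradicting uniqueness. Hence $v_{t_1 + j} = v_{t_2 + j}$ for all $j \ge 0$, so the vertex sequence is eventually periodic, and therefore so is the edge-label sequence. This label sequence is exactly the $p$-adic expansion of $y$, so $y$ is eventually periodic $p$-adically, whence $y \in \QQ \cap \ZZ_p$.

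The main point requiring care is the splicing argument: one must combine the injectivity and right-resolving properties of the standard presentation to pass from uniqueness of the element $y \in Y$ to uniqueness of the infinite walk from $v$, and then use that uniqueness to rule out any branching in the eventual behavior. Everything else is either the standard eventually-periodic characterization of $\QQ \cap \ZZ_p$ or a direct construction.
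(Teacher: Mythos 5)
Your proof is correct and follows essentially the same route as the paper: take a standard presentation, prune so every vertex has an exit edge, deduce that the path set is a single infinite walk which by finiteness of the graph must be eventually periodic (hence $y \in \QQ\cap\ZZ_p$), and conversely realize any eventually periodic expansion by a ``lollipop'' graph. Your splicing argument just makes explicit the step the paper states tersely as ``there is exactly one exit edge from each vertex,'' so the two proofs are the same in substance, with yours supplying more detail.
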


\begin{rem}
This simple  result  supplies a dynamical characterization 
of the $p$-integral rational numbers $r$ inside $ \ZZ_p$.
\end{rem}

\begin{proof}
Given a  $p$-adic path set fractal $Y$, 
assume it is given with a standard presentation $(\bar{i}_p, \sG, v_0)$.
Using the pruning construction used in proving Theorem \ref{self-sim}
we may without loss of generality assume each vertex in $\sG$
has at least one  exit edge. 
Such a presentation has an underlying path set $X_{\sG}(v_0)$
consisting of a single infinite path if and only if 
there is exactly one exit edge from each vertex, and if the path is eventually
periodic. The latter forces any element $r$ to be a rational number in $\ZZ_p$.
Conversely, we may easily construct a path set consisting of a single
element giving the $p$-adic expansion of $r$.
\end{proof}

%
%
%

\section{Hausdorff Dimension  of $p$-adic path set fractals} \label{sec3}

We obtain a formula for the Hausdorff dimension of a $p$-adic path
set fractal $Y$, computable from a standard form presentation of $Y$.
The formula is based on  a Hausdorff dimension
relation between  $p$-adic path set fractals  and graph-directed constructions
on the real numbers.

To state the result, we
note  that the  {\em adjacency matrix} $A= A(G)$ of a directed graph $G$ is
a non-negative integer matrix whose rows and columns are numbered by the vertices of $G$ (in the same
order) with entry $A_{ij}$ counting the number of directed edges outgoing from vertex $i$ and incoming to
vertex $j$. 

\begin{thm}\label{th31a} { \em (Hausdorff dimension formula)}
Let $Y$ belong to $\sC(\ZZ_p),$ and suppose that $Y:=(\bar{f}_p, \sG, v)$
is a standard form presentation of $Y$.

(1)  The map $\iota_p: \mathbb{Z}_p \rightarrow [0,1] \subset \RR$ sending 
$\alpha= \sum_{k=0}^{\infty}{\alpha_k p^k} \in \mathbb{Z}_p$
to the corresponding real number with base $p$ radix expansion
 $$\iota_p(\alpha) :=\sum_{k=0}^\infty \frac{\alpha_k}{p^{k+1}}$$
is a continuous map.  Under this map
the  image set  $K_0:= \iota_p(Y) \subset [0,1]$  is a
construction sub-object 
 of a Mauldin-Williams graph-directed fractal 
whose edge similarity maps are all  contracting
similarity maps of $\RR$ with contraction ratio $1/p.$

(2) The Hausdorff dimensions of these sets are related by
$$
d_{H}(Y) = d_{H}(K_0).
$$

(3) The Hausdorff dimension 
\begin{equation}\label{eq32}
d_{H}(Y) = d_{H}(K_0) = \log_p \alpha,
\end{equation}
where $\alpha= \sigma(A(\sG))$ is the spectral radius of the adjacency matrix $A= A(\sG)$ of 
the underlying directed graph $G$ of $\sG$.
\end{thm}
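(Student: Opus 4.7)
The strategy is to transport $Y$ from $\ZZ_p$ into $[0,1]$ via the radix map $\iota_p$, realize the image $K_0$ as a Mauldin--Williams construction sub-object built from the same combinatorial data, invoke Proposition~\ref{prop23}, and use dimension preservation under $\iota_p$ to conclude.

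For part (1), continuity (in fact $1$-Lipschitzness) of $\iota_p$ is immediate: if $x,y \in \ZZ_p$ agree on their first $n$ digits then $|\iota_p(x) - \iota_p(y)| \le p^{-n} = |x-y|_p$. Starting from the standard presentation $(\bar{i}_p, \sG, v)$, I first arrange the Mauldin--Williams requirement (G2) of Definition~\ref{de21} that at most one directed edge connect each ordered vertex pair, by passing to an equivalent presentation $\sG'$ through vertex-splitting: for each vertex $j$ receiving several edges from a common source $i$, introduce a clone of $j$ per such incoming edge, each clone inheriting the outgoing structure of $j$ (iterated if necessary). This transformation leaves $Y$ and the right-resolving, reachable, injective character of the presentation unchanged, and also preserves the spectral radius: any right eigenvector of $A(\sG)$ lifts to one of $A(\sG')$ by being constant on each clone class, and conversely projects back, giving $\sigma(A(\sG')) = \sigma(A(\sG))$. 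Now set $J_i = [0,1]$ for each vertex of $\sG'$ and $T_e(x) = (x + j_e)/p$ for each edge $e$ with label $j_e$. The right-resolving property makes the sub-intervals $T_e([0,1]) = [j_e/p,\, (j_e+1)/p]$ non-overlapping across exit-edges from a common vertex; all ratios equal $1/p < 1$, so the cycle-contraction condition (G3)(b) is automatic. The intertwining $\iota_p \circ \phi_e = T_e \circ \iota_p$, verified directly from $\phi_e(x) = px + j_e$, combines with the uniqueness of set-valued fixed points from Theorem~\ref{pr221} and Proposition~\ref{pr212} to force $\iota_p(K_i) = K_i^{\text{MW}}$ at every vertex, so in particular $K_0 = K_v^{\text{MW}}$ is a construction sub-object.

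For part (2), off a countable set of double-representation points (which cannot affect Hausdorff dimension), $\iota_p$ maps $p$-adic cylinders of diameter $p^{-n}$ bijectively onto half-open intervals of length $p^{-n}$. Hence covers of $Y$ and $K_0$ correspond with matching diameters, the $s$-dimensional Hausdorff measures agree up to a bounded multiplicative factor, and $d_H(Y) = d_H(K_0)$. For part (3), I apply Proposition~\ref{prop23} to the Mauldin--Williams system constructed in part (1). Since every similarity has ratio $1/p$, the parametrized matrix is $A_\beta = p^{-\beta} A(\sG')$ with spectral radius $p^{-\beta} \sigma(A(\sG'))$, and the matrix dimension solves $p^{-\beta} \sigma(A(\sG')) = 1$, giving $\beta = \log_p \sigma(A(\sG'))$. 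Reachability places every strongly connected component of $\sG'$ inside the family $C_v$ of Proposition~\ref{prop23}(2), and the standard fact that the spectral radius of a nonnegative matrix equals the maximum of the spectral radii of its strongly connected blocks identifies this matrix dimension with the required maximum. Combined with $\sigma(A(\sG')) = \sigma(A(\sG))$ from part (1) and $d_H(Y) = d_H(K_0)$ from part (2), this yields $d_H(Y) = \log_p \sigma(A(\sG))$.

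\textbf{Main obstacle.} The principal technical steps are (a) the vertex-splitting bookkeeping needed to reduce to Mauldin--Williams' single-edge requirement while preserving the standard-presentation conditions and the adjacency spectral radius, and (b) the dimension-preservation of the radix map $\iota_p$ despite its non-injectivity at double representations. Both are routine in spirit; (b) is harmless because the double-representation set is countable and hence of Hausdorff dimension zero, while (a) reduces to an eigenvector lifting-and-projection argument once the cloning is set up carefully.
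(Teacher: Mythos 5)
Your plan is essentially the paper's own: transport $Y$ into $[0,1]$ by the radix map, remove multiple edges by vertex-splitting, realize the image as a Mauldin--Williams sub-object, apply their dimension formula, and use dimension-preservation of $\iota_p$. There is, however, one step that does not work as written: you take every seed set $J_i=[0,1]$, but Definition~\ref{de21}(G1) requires the $J_i$ to be pairwise nonoverlapping compact sets, so the system you build is not a geometric graph-directed construction in the sense needed to invoke Proposition~\ref{prop23}. The paper's fix is to translate the copies apart, taking $J_i=[2i,2i+1]$ and conjugating your maps accordingly, $T_{i_1,i_2}(x)=\tfrac1p\bigl((x-2i_2)+j_e\bigr)+2i_1$; since this merely translates each sub-object by $2i$, your identification $K_0=\iota_p(Y)$ via the intertwining $\iota_p\circ\phi_e=T_e\circ\iota_p$ and the rest of the argument survive unchanged, but the translation is needed before Proposition~\ref{prop23} can legitimately be cited.

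Two smaller points. First, a standard presentation may contain vertices with no exit edge; these must be pruned to satisfy (G2)(a) (harmless, since deleting a zero row and its column does not change the nonzero spectrum). Second, your spectral-radius argument for the splitting is right in the lifting direction but vague in the other: the covering relation gives $A(\sG')P=PA(\sG)$ for the pullback matrix $P$, so the Perron eigenvector of $A(\sG)$ lifts and $\sigma(A(\sG'))\ge\sigma(A(\sG))$, but "conversely projects back" needs either the transposed relation $P^{T}A(\sG')^{T}=A(\sG)^{T}P^{T}$ applied to the left Perron eigenvector, or the path-counting bound the paper uses, $N_k(A(\sG'))\le s\,N_k(A(\sG))$, to get the reverse inequality. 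With these repairs the proof is complete and coincides with the paper's.
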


\begin{rem} 
(1) In  Theorem 1.9 of  \cite{AL12a} 
the topological entropy of a path set $X= X_{\sG}(v)$,
with a right-resolving reachable presentation  is given by
$$
h_{top}(X) = \log \sigma(A(\sG)),
$$
where $ \sigma(A(\sG))$ is the spectral radius as above. We deduce that
for any $p$-adic path set fractal $Y$ constructed from $X$ by
an injective presentation  $Y:= (\bar{f}_p, \sG, v)$ ,  its Hausdorff dimension is  
$$
d_{H}(Y) = \frac{h_{top}(X)}{\log p}.
$$
This extends
to path sets  a result that is well known  in the shift-invariant case (Furstenberg \cite[Proposition III.1]{Fur67}).

(2) The standard presentation assumption for $Y$ above
is needed  to guarantee equality of the
Hausdorff dimension with $\log_p \alpha$. For a general presentation $Y:=(\bar{f}_p, \mathcal{G}, v_0)$
of $X$ the  adjacency matrix counts the growth rate of number of paths,
which upper bounds the number of distinct sequences of path labels. 
That is, one always has
$$ 
d_{H}(Y) \le \frac{ h_{top}(X_{\sG}(v))} {\log_p}  \le \log_p \sigma(A(\sG)).
$$

(3) The allowed values $\sigma (A(\sG))$ that may occur above are exactly
the  class of positive real algebraic integers called {\em Perron numbers},
  introduced by Lind \cite{Lin84}.
\end{rem}
\begin{proof}
(1) The map $\iota_p: \ZZ_p \to  [0,1] $ is surjective and one-to-one away from a countable
set. It is continuous because the $p$-adic topology is strictly finer than the comparable topology
on base $p$ expansions of real numbers.

We are given a standard presentation of $Y= ( \bar{i}_p, \sG, v)$, where without loss
of generality the alphabet $\sA=\{0, 1, ..., p-1\}.$ The graph $\sG$ is right-resolving
and reachable, but this will not be sufficient to obtain a Mauldin-Williams construction
for the image of $X_{\sG}(v)$ preserving  symbol sequences. We need a standard
presentation with extra properties. 
We call  a presentation  {\em right-separating}  if 
the underlying directed graph $G$ of $\sG$
has no multiple edges. \smallskip

{\bf  Claim.} {\em There exists a presentation $Y=(\bar{i}_p, \sG, v)$  in which $\sG$  is both right-resolving 
and right-separating.}\smallskip

 To show the claim, given  a right-resolving presentation $(\sG', v')$, we  show it may be converted
to a right-separating presentation  by making use of a vertex-splitting construction, as
follows. Suppose that a vertex $v$ of $\sG'$ has $k \ge 2$ labeled edges
from a vertex $w$, which necessarily has distinct labels. 
We  create a new labeled graph $\sG''$ that retains all vertices of $\sG$ except
 $v$ and replaces $v$ by $k$ vertices $v^{(i)}$, $1 \le i \le k$. 
 
 {\em Case 1.} $ w\ne v$.
 
 In this case we assign a 
 single new labeled edge from $w$ to each $v^{(i)}$
 such that  as $i$ varies, the corresponding edge labels  exhaust  the $k$ labels of edges from $w$ to $v$. 
 The exit edges assigned each $v^{(i)}$ 
 each duplicate the exit edges from $v$, both in multiplicity and in labels, with self-loops 
 of $v$ corresponding to self-loops of $v^{(i)}$.
 The entering edges to $v^{(1)}$ will all be the same in multiplicity and in labels as for $v$, while for $v^{(i)}$ with
 $i\ge 2$ there will be no entering edges from the rest of the graph, with the exception of self-loops,
 assigned as above.
 Finally all  edges between any two vertices distinct from the $v^{(i)}$ will be the same as
in the original graph.

{\em Case 2.} $w= v$
 
 In this case $v$ has $k$ self-loops, which have distinct labels since the right-resolving property
 is assumed. We may identify the $k$ loop labels with $\{ 1, 2, ..., k\}$ in some fixed fashion,
 and then assign a directed edge from $v^{(i)}$ to $v^{(j)}$ with edge label corresponding to
 $i+j ~(\bmod~k)$, for $1 \le i, j \le k$. The other exit edges assigned each $v^{(i)}$ will duplicate the exit
 edges of $v$, in multiplicity and labels (excluding self-loops). The entering edges of $v^{(1)}$
 will  be the same in multiplicity and labels as for $v$ (excluding self-loops). All other $v^{(i)}$
 have no entering edges  coming from any other  vertices of the original graph $v' \ne v$.

  Now that $\sG''$ is constructed, we assert  that  the path sets from all states
$(\sG'', w')$ for  $w' \ne v^{(i)}$ agree with those of $(\sG', w')$, while all path sets $(\sG'', v^{(i)})$
agree identically with that of $(\sG', v)$.  This assertion  may be established  by viewing $\sG'$ as a covering of
$\sG$ which preserves edge labels, which has the $k$ vertices $v^{(i)}$ lying above vertex $v$,
 and all other vertices agreeing.
One may check that each edge in $\sG'$  from 
a given initial vertex  $v'$ lifts uniquely  to a suitable vertex and edge above it in $\sG''$
(here $v'=w$ is the only interesting case), except that self-loops from $v$ lift to a
self-loop for any initial vertex $v^{(i)}$. After the first step, any path lifts uniquely to $\sG''$.  
 Conversely any labeled path in the lifted graph projects to an allowable
labeled path in $\sG'$. The assertion follows.

This construction has the feature that the  new graph $\sG''$ is still right-resolving. 
In consequence the  construction may be repeated. 
In doing so,  we must eventually arrive at a
right-resolving presentation that is also   right-separating. To see this, 
assign to each vertex an integer invariant that is the product of the multiplicities of
all entering edges. When a vertex is split, this invariant decreases for all of the $k$ descendants
$v^{(i)}$, and remains the same for all other vertices of the graph.
 By the well-ordering of $\NN$, the splitting procedure will eventually halt
 at a right-separating presentation. This establishes the claim.\smallskip

We have now obtained a standard presentation that
is also  a  right-separating presentation $(\bar{i}_p, \sG, v)$ of $Y$. 
By pruning vertices with no exit edges (repeating the operation finitely
many times, as necessary), we may obtain such a presentation 
in which additionally each vertex has at least one exit edge.
Associated to $\sG$ are $|V(\sG)|$ path sets $X_{\sG}(\bw)$,
$\bw \in V(\sG)$, and corresponding $Y_{\bw} := i_p(X_{\sG}(\bw)) \in \sC(\ZZ_p)$.

We now proceed to  map these sets to real image sets which are 
corresponding graph-directed constructions.
For convenience we re-number the vertices of $\sG$, $0 \le i \le n$,
where $n = |V(\sG)|-1$, with vertex $0$ corresponding to the original $v$.
We  map the individual sets $Y_{j}$ under the map $\iota_{p}+2j$ to the  image sets
 $$
 K_{j} := \iota_p(Y_{j}) + 2j \subset [2j, 2j+1].
 $$
 The integer shifts by $2j$ make all sets $K_j$ disjoint in $\RR$; Thus $K_0 = \iota(Y).$

We  show below that  the sets $(K_0, K_1, \cdots , K_n)$ are 
the complete set of construction sub-objects
of a particular Mauldin-Williams graph-directed construction.
The integer shifts made above enforce the non-overlapping condition needed in that construction.

  We  set up a   Mauldin-Williams geometric graph-directed construction in $\RR$,
which has  construction object $K$  contained in the compact set  $[0,2n]$, for which 
the sets $(K_0, K_1, ..., K_n)$ will form  the construction sub-objects.
The initial sets will be $J_j = [2j, 2j+1]$ for $0 \le j \le n$; they satisfy 
the non-overlapping property (G1). It uses the same directed graph $G$ as that of $\sG$.
The graph $G$
has all the correct properties (G2) to be a graph in
the Mauldin-Williams construction: it is connected, has at most one directed edge between any
ordered  pair of vertices (by the right-separating property), and  each vertex has at least one exit edge. 
To each directed labeled edge  $e = [ i_1, i_2]$ of $G$
with   label $j_e$ and map $\phi_{e}$ we associate the real-valued map  
\[
T_{e}(x) =T_{i_1, i_2}(x) := \frac{1}{p} \Big((x-2i_2)+j \Big)+ 2i_1.
\]
This map is a similarity  with contraction ratio $1/p$, and note that
\begin{equation}\label{inclusion}
T_{i_1, i_2}(J_{i_2}) \subset J_{i_1}.
\end{equation}
Now condition (G3)(a),  that 
the sets $\{ T_{i,j}(J_j) : e=(i,j)\}$ are non-overlapping for each $i$,
holds as a consequence of the  right-resolving
property of $\sG$. The second condition 
\[
 J_i \supseteq \bigcup \{T_{i,j}(J_j) | (i,j) \in G^{*}\}
\]
follows from (\ref{inclusion}). Finally condition (G3)(b) holds since every map $T_{e}(x)$ is
a strict contraction.

By the basic theorem of Hutchinson \cite[Theorem 3.1]{Hut81},
 this construction has a unique compact attractor $K$ consisting
of a collection of disjoint compact sub-objects $\{ K_j: 0 \le j \le n \}$, and it remains to verify
that
$$
K_0 = \iota_p(Y).
$$
The sets $K_j$  are 
obtained  by the Mauldin-Williams (iterative)
geometric graph-directed construction using \cite[Theorem 1]{MW88},
starting with the (disjoint) initial sets $J_{j} := [2j,2j+1]$. 
After $m$-iterations, we have sets $J_{j}^{(k)}$ which for fixed $j$ form
nested sequences of compact sets, each having nonempty interior. 
We  then  obtain the limit objects
$K_j := \bigcap_{k} J_{j}^{(k)}$. 
The Mauldin-Williams construction object is $K = \cup_{0 \le j \le n} K_{j}.$
One can
prove by induction on $k$ that
\[
J_{i}^{(k)} = 2j + \bigcup \Big( \frac{\alpha_0}{p} + \frac{\alpha_1}{p^2} + \cdots + \frac{\alpha_{k-1}}{p^k} +\frac{1}{p^k}[0,1]\Big).
\]
where the set union runs over all symbol sequences of length $k$ on the labeled directed graph $\sG$
starting from  vertex $i$.
From this construction one sees that  $K_{0} = \iota_p(Y)$ 
since $K_{0} \subset [0,1]$ and the underlying symbol sequences of $Y$ and of $K_{0}$
agree.  Moreover one sees that $K_{j} \subset [2j,2j+1]$ and all
 the construction sub-objects satisfy
$K_j = (\iota_p) (Y_j)+2j, ~~~0 \le j \le n.$

(2)   The definition of $p$-adic Hausdorff dimension is quite similar 
to Hausdorff dimension for real numbers on the interval $[0,1]$, cf. Abercrombie \cite{Ab95}.
An {\em  $\epsilon$-covering} of $Y \subset \ZZ_p$ is a covering of $Y$
by a countable collection of $p$-adic open balls all having diameter at most $\epsilon$.
and considers the quantities
$$
m_{\beta}(Y) := \lim_{\epsilon \to 0}\,\Big(  \inf_{\epsilon -cover} (\mbox{Vol}(B(x_i, \epsilon_i)))^{\beta} \Big),
$$
in which the data $\{ (x_i, \epsilon_i): i \ge 1\}$ describes the covering, specifying  center $x_i$ and radius $\epsilon_i$
of $p$-adic disks,
with all $0 < \epsilon_i \le \epsilon$, and 
$\mbox{Vol}(S)$ denotes the usual $p$-adic measure of $S \subset \ZZ_p$.
There is a cutoff value $\beta_0$ such that $m_{\beta}(Y) = \infty$ for $\beta> \beta_0$
and $m_{\beta}(Y) =0$ for $\beta< \beta_0$; this is the Hausdorff dimension of $Y$.
We use the following basic fact, following  \cite[Section 3.2 ]{Lag09}.

{\bf Claim.} {\em The  
 mapping
$\iota_p: \ZZ_p \to [0,1]$ which sends a $p$-adic number $\lambda= (  \cdots \alpha_2\alpha_1\alpha_0)_p$ 
to the real number with base $p$ expansion $.\alpha_0\alpha_1\alpha_2 \cdots$ is continuous
and one-to-one off a countable set.
This mapping preserves Hausdorff dimension,  i.e $d_{H}(Y) = d_{H} (\iota_p(Y)).$}

To verify the claim, note that one can expand each set in  a  $p$-adic covering  of a set $Y$
to a  closed-open disk
$$
B(m, p^j) = \{ x \in \ZZ_p: ~x \equiv m~(\bmod~p^j)\}
$$
(which has diameter $\frac{1}{p^j}$), with at most a factor of $p$ increase in
diameter, and similarly one can inflate any real covering to a covering
with ternary intervals $[\frac{m}{p^j}, \frac{m+1}{p^j}]$ with at most a factor of
$p$  increase in diameter. But these special intervals are assigned the same diameter under
their respective metrics, and this can be used to show the Hausdorff dimensions
of $Y$ and $\iota(Y)$ coincide.  (The Hausdorff measures of the resulting 
sets is not proved to coincide by this argument.) 

The truth of the  claim immediately yields $d_{H}(K_0) \equiv d_{H}(\iota(Y)) = d_{H}(Y)$, as asserted.

(3)  We are given a standard presentation $Y=(\bar{i}_p, \sG, v_0)$ of the $p$-adic path set fractal
$Y$.

Assume first that this presentation  is  right-separating.
In that case we can directly  apply the formulas  of Mauldin-Williams to the construction made
in (1). The set $K_{v_0}$ is connected by directed paths to every vertex of the graph $\sG$, so
that the vertex set $C_1 = SC(\sG)$ consists of all strongly connected components of $\sG$.
By Proposition \ref{prop23}
 the Hausdorff dimension of the sub-object $K_{v_0}$
is then the same as that of the full construction object $K= \bigcup_{v} K_{v}.$

The Hausdorff dimension of the full object $K$ can now be computed
using Proposition \ref{prop23a}.
In our case all nonzero  maps for $G$ are similarities  with constant ratio $1/p$,
which  yields the formula for the scaled construction matrix
$$
A_{\beta} =  [ t_{i, j}^{\beta}]_{1 \le i, j \le n} = (\frac{1}{p})^{\beta} A_G,
$$
in which $A_G$ is the {\em adjacency matrix} of the directed graph $G$,
given by
\[
A_G = [ m_{i,j}]_{1 \le i, j\le n}
\]
where  $m_{i,j}$ counts the number of directed edges from vertex $v$ to vertex $w$.
Now set
$$
\Phi(\beta) := \mbox{ Spectral radius of}~~A_{\beta},
$$
and the special form of $A_{\beta}$ yields
$$
\Phi(\beta)= 
 \lambda_0 p^{-\beta},
$$
in which  $\lambda_0$ is the spectral radius $\sigma(A_G)$.
 By  Proposition \ref{prop23a} the 
full construction object has Hausdorff dimension
$$
d_{H}(K) = \alpha
$$
where $\Phi(\alpha) =1$. This requires
$$
\alpha = \log_{p} \lambda_0= \log_p \sigma(A_G),
$$
the desired formula.

It remains to  treat the general case, in which the intial standard presentation $Y= (\bar{i}_p, \sG, v_0)$ is not necessarily right-separating. We show  that the formula for Hausdorff dimension continues to hold.
To handle this case, we study the effect of the state-splitting construction introduced earlier  to
convert a right-resolving presentation $(\sG', v')$ of a path set to a presentation $(\sG, v_0)$
that is also right-separating. It suffices to  show that every step of this procedure yielding a graph $(\sG'', v')$ preserves
the value of the spectral radius (Perron eigenvalue) of the associated 
nonnegative integer matrix $A_{\sG''}$. If this is shown, then the already proved formula for
the Hausdorff dimension for the $p$-adic path set associated to $(\sG, v_0)$ will carry
over to that for $(\sG', v')$.

To check  that the spectral radius is preserved under this operation,
we use the known fact that 
the spectral radius of a nonnegative matrix $A$ is
given by
$$
\sigma (A) = \lim_{k \to \infty}   (N_k(A))^{1/k},
$$
in which $N_k(A) = {\bf e}^T A^k {\bf e}$, where ${\bf e} = [ 1, 1, .., 1]^T$ is a column vector.
Here $N_k(A)$ counts the number of directed paths of length $k$ between all pairs
of vertices of $A$. (The existence of the limit is part of the assertion.)
We use the  fact that $\sG''$ is a covering of $\sG'$ and that all (labeled) paths of $\sG'$
lift uniquely to paths of $\sG''$, with the exception of paths that have starting vertex $v$,
which have $s$ distinct lifts, where $s$ was the number of vertices of $\sG$
that were split. From this we conclude that
$$
N_k(A_{G'}) \le N_k(A_{G''}) \le s N_k(A_{G'}).
$$
Since $s$ is constant, we conclude that
$$
\sigma(A(G')) \le \sigma(A_{G''}) \le  \lim_{k \to \infty}  s^{1/k}  N_k(A_{G'})^{1/k}= \sigma(A_{G'}),
$$
giving the result. 
\end{proof}

\begin{proof}[Proof of Theorem \ref{th12}]
This  result is immediate from Theorem \ref{th31a},
combining  (2) and (3).
\end{proof}

%
%
%

\section{$p$-adic  addition  and path set fractals} \label{sec4}

We analyze the effect on $p$-adic path set fractals of addition
of $p$-integral  rational numbers $r \in \QQ$, viewing $\QQ$ as a subfield of $\QQ_p$. 
We describe algorithms which when given a presentation $(\sG, v_0)$ 
of a path set $Y$, will produce a presentation of $Y+r$.

%
%
%
\subsection{Sum of a path set and  a $p$-integral rational number}\label{sec41}

 Theorem \ref{th13} is an immediate corollary of the following
 stronger result. Recall that a {\em $p$-integral} rational number $r$
 is any $r \in \QQ \cap \ZZ_p$.
 
 \begin{thm} \label{th41}
 Let $Y$ belong to $\sC(\ZZ_p)$, and suppose it  has a standard presentation
 $Y:= (\bar{i}_p, \sG, v_0)$ having $V$ vertices. Suppose also that  $r$ is a $p$-integral rational
 number,
 which has a  $p$-adic expansion with  pre-periodic part of length
 $Q_0$ and a periodic part of period $Q$. Then
 the additively shifted set $Y^{'} := Y + r \in \sC(\ZZ_p)$, and it has a 
 right-resolving presentation having at most
 $ 2p(Q_0 + Q)V$ vertices.
 \end{thm}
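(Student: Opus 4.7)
The plan is to construct explicitly a right-resolving presentation of $Y+r$ by a product automaton that runs the schoolbook $p$-adic addition algorithm digit-by-digit. First I would fix the $p$-adic expansion of $r = \sum_{k \ge 0} r_k p^k$, where $(r_0, \ldots, r_{Q_0-1})$ is the pre-period and $(r_{Q_0}, \ldots, r_{Q_0+Q-1})$ is the period. The state space of the new labeled graph $\sG'$ will consist of triples $(v, k, c)$, where $v$ ranges over the $V$ vertices of $\sG$, $k$ ranges over the $Q_0 + Q$ positions in one unrolling of the expansion of $r$ (with positions $k \ge Q_0$ taken cyclically, so that the successor $\sigma(k)$ is $k+1$ for $k < Q_0+Q-1$ and $\sigma(Q_0+Q-1)=Q_0$), and $c \in \{0,1\}$ is a carry bit. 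The initial vertex is $(v_0, 0, 0)$.

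The transitions mirror addition with carry: from $(v,k,c)$ and each edge of $\sG$ from $v$ to a vertex $w$ carrying label $y \in \{0,\ldots,p-1\}$, write $y + r_k + c = s + c'\,p$ with $s \in \{0,\ldots,p-1\}$ and $c' \in \{0,1\}$, and add to $\sG'$ a single edge $(v,k,c) \to (w, \sigma(k), c')$ with label $s$. A straightforward induction on path length shows that an infinite walk from $(v_0,0,0)$ in $\sG'$ spells out the $p$-adic expansion of $y+r$ exactly when its projection to $\sG$ spells out the $p$-adic expansion of some $y \in Y$; hence under the identity digit assignment the path set of $(\sG', (v_0,0,0))$ equals $Y+r$, proving $Y+r \in \sC(\ZZ_p)$.

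The right-resolving check is immediate: the outgoing edges at $(v,k,c)$ are in bijection with those at $v$ in $\sG$, the corresponding $y$-labels are pairwise distinct by hypothesis, and since $r_k$ and $c$ are determined by the state, the output labels $s \equiv y + r_k + c \pmod p$ are also pairwise distinct. After pruning unreachable states one has at most $2V(Q_0+Q)$ vertices, which is well within the stated bound $2pV(Q_0+Q)$.

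I expect the main technical point to lie at the seam between the pre-period and the period of $r$: on the first pass one enters position $Q_0$ from $Q_0 - 1$, while on every subsequent loop one re-enters $Q_0$ from $Q_0 + Q - 1$, and the carry bit at these two instants need not coincide. Encoding $c$ inside the state resolves this automatically — the two entries become different states of $\sG'$ — but one must verify that the infinitely-often-visited pairs $(k,c)$ are consistent with the eventually periodic nature of $r$, which follows because the carry dynamics on the period depend only on the preceding $y$-digits and on $r_k$. Everything else is routine bookkeeping.
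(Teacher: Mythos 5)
Your construction is essentially the paper's own proof: a product automaton whose states track the current vertex of $\sG$, the position within the (pre)period of $r$, and a bounded carry, with right-resolving following from injectivity of $y \mapsto y+r_k+c \bmod p$ and correctness from the telescoping identity $y_n + r_n = \sum_{k<n} s_k p^k + c_n p^n$. Your version is in fact slightly leaner — you drop the redundant ``incoming edge label'' component the paper carries in its states and use the sharp carry bound $c\in\{0,1\}$ rather than $e\le 2$ — but the argument is the same, and your worry about the pre-period/period seam is harmless since the automaton is deterministic over each input path.
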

 
 \begin{proof}
 We give an explicit construction of a presentation for $Y + r$
 which  certifies it is a $p$-adic path set fractal, starting from a given
 standard presentation.
 
 Suppose first that we are in the special case where
 $r$ has a purely periodic $p$-adic expansion $(\cdots c_2 c_1 c_0)_p$ , of period $Q$, with $c_{j+Q} =c_j$,
 and write
 $$
 r = \sum_{j=0}^{\infty} c_j p^j = \big( \sum_{j=0}^{Q-1} c_j p^j \big)\big(\sum_{k=0}^{\infty} p^{kQ} \big).
 $$
 
 We aim to construct  a 
 standard presentation $Y' = (\bar{i}_p, \sG', \bw_0)$ and show that $Y' = Y + r$.
 (At the level of symbols we may identify $Y'$  with the underlying path set $\sP'=(\sG', \bw_0)$ since the
 identity map matches them.)
  The vertex states of $\sG'$ will be labeled $\bw= (v, f, e, a)$,
 in which:
 \begin{enumerate}
 \item[(i)]
  $v$ denotes a vertex of $\sG$;
  \item[(ii)]
   $f$  denotes a place-marker
 in the periodic portion of the $p$-adic expansion of $r$, and saisfies $0 \le f\le Q-1$;
 \item[(iii)]
  $e$ keeps
 track of the current amount of  carry-digit information not yet incorporated
 in the sum-set $p$-adic expansion, 
 \item[(iv)]
 $a$ with $0 \le a \le p-1$ denotes an
 edge label value. 
 \end{enumerate}
 The initial vertex will be $\bw_0= (v_0, 0, 0, 0).$
  We will establish the upper bound $e \le 2$ on the maximum size  of a carry-digit   in the analysis below.

 The exit edges of $\sG'$ map a vertex $\bw$ to $\bw' =(v', f', e',a')$ in which
 there is a directed labeled  edge $(v, v') \in \sG$, with label $\ell_1$
 satisfying $\ell_1=a'$, 
 and the value $f' \equiv f+1 ~(\bmod\, Q)$. 
 The edge label $\ell^{'}$ assigned to this edge will be $0 \le \ell^{'} \le p-1$ with
 \begin{equation}\label{l-value}
 \ell^{'} \equiv  e+\ell_1 + c_f ~(\bmod \, p),
 \end{equation}
 and the value $e'$ is required to satisfy
 \begin{equation}\label{det-value}
 e' = \frac{1}{p} \big( e + \ell_1+ c_f -\ell^{'}).
 \end{equation}
 Finally we  define the graph $\sG'$ to consist of all states reachable from the initial
vertex $\bw_0$, and all edges constructed between these states.  

We first show that all reachable vertices
satisfy the carry-digit bound $e \le 2$; this shows that the graph $\sG'$ is finite and
also bounds its size. The carry-digit bound is proved by induction on the number
of steps $n$ along a directed path. The base case $n=0$ has $e=0$.
For the induction step, using the rule above
$$
e' = \frac{1}{p} \big( e + \ell_1+ c_f -\ell^{'} ) \le \frac{1}{p} \big( 2 + (p-1) + (p-1) \big) \le 2,
$$
completing the induction step. We conclude in this case that $\sG'$ has
at most $2p(Q-1)V$ vertices.

We next show that the presentation $Y'= (\bar{i}_p, \sG', \bw_0)$ is a standard presentation.
We first check that $\sG'$ is right-resolving.
To see this, note that the exit edges from a vertex $\bw$ correspond to exit edges from
vertex $v$ in the right-resolving graph $\sG$, whence any two edges
have different values of $\ell_1$.  Now the exit edge label $\ell^{'}$ is an invertible linear
function of $\ell_1$ by (\ref{l-value}), since the values $e$ and $c_f$ are fixed by
$\bw$, so all exit edges have distinct labels, as asserted. This graph $\sG'$ is 
reachable from vertex $\bw_0$ by construction, so we have a standard presentation.

We next observe that  a  lifted path in $\sG'$ uniquely determines the  path in $\sG$ it lies over.
 This follows since the path label value $\ell_1$ is uniquely recoverable from the path label $\ell^{'}$
and the vertex data on $\sG$, using (\ref{det-value}), since $e'$ is known and $c_f$ is known from
the vertex label $f$. The underlying path on $\sG$ determines the quantity
$$
y_{n} = \sum_{k=0}^{n-1}  a_{k} p^k,~~~i=1,2,
$$
corresponding to the initial part of the $p$-adic expansion of a value $y \in Y$ being
determined by the $\sG$-path. (The values $a_k$ are the successive labels $\ell_1$ along the $\sG$-path.)
Conversely,  each path in $\sG$ with initial vertex $v_0$  lifts to a unique path in $\sG'$
with initial vertex $\bw_0$.
Given a vertex $\bw$,  a labeled edge $(v, v')$ with label $\ell_1$
determines the values $a'= \ell_1$ and $e'$ a unique vertex $\bw'$ that $\bw$ connects to. 

We now show  that for an infinite  path in $\sG$ determining $y \in Y$,
the labels of the lifted path in $\sG'$ suffice to compute the associated value $y' \in Y'$.
We prove  this by induction on $n$, for the $n$-step initial path.
The  successive edge labels $\{\ell_i^{'}: 0 \le i \le n-1\}$
of the lifted path in $\sG'$  with the end vertex data $e'=e_{n}$ determine the quantity
$$
y_n^{'} := \sum_{k=0}^{n-1} \ell_k^{'} p^k  + e_n p^n.
$$
We establish by induction on $n$ that
\begin{equation} \label{match}
y_{n-1}^{'} = y_{n-1} + r_{n-1},
\end{equation}
in which
$$
r_{m} := \sum_{j=0}^{m-1} c_k p^k,
$$
is a truncated version of the $p$-adic expansion of $r$.
The base case $n=1$ is clear.
  By the induction hypothesis, 
 \begin{eqnarray*} 
 y_{n} +r_{n} &= &(y_{n-1}+r_{n-1}) + a_{n-1} p^{n-1}+ c_{n-1} p^{n-1}\\
 &=&\big( \sum_{k=0}^{n-2} \ell_k^{'}p^k + e_{n-1}p^{n-1}\big) + a_{n-1} p^{n-1} + c_{n-1}p^{n-1} \\
 &=& \big( \sum_{k=0}^{n-1} \ell_k^{'}p^k \big) + e_n p^n,
\end{eqnarray*}
 the last equality holding  by virtue of (\ref{det-value}), using $c_f= c_{n-1}$ and $\ell_1 =a_{n}$.
 This completes the induction step, proving (\ref{match}).
  
  Now the lifted path data  yields  the $p$-adic limit
  $$
 \lim_{n \to \infty} \, \sum_{k=0}^{n-1} \ell_k^{'} p^k  =  \lim_{n \to \infty} (y_{n} + r_{n})- \lim_{n \to \infty} e_n p^n =y+r.
 $$
 We conclude that the lifted path of $\sG'$ corresponding to $y \in Y$
  determines   the point $y':=y+ r \in Y'$. It follows that $Y' = Y + r$, as asserted.
  This completes the argument in the case that  $r$
  has a purely periodic $p$-adic expansion.
 
 It remains to 
 treat the general case  where $r$ has a preperiodic part, say 
 length $Q_0$. We must extend the construction above and  upper bound the number of states in the constructed 
 presentation $(\sG, \bw)$. The extension is routine:
 we add extra vertices $\bw := (v, d_j, e, a )$ to $\sG$,
 in which $v$ denotes a vertex of $\sG$, $d_j$ marks the $j$-th preperiodic digit of $r$,  $1 \le j \le Q_0$.
 The exit edges of $\sG'$ map a vertex  to $\bw' =(v', d_{j+1}, e',a')$ in which
 there is a directed labeled  edge $(v, v') \in \sG$, with label $\ell_1$
 satisfying $\ell_1=a'$. 
  The edge label $\ell^{'}$ assigned to this edge will be $0 \le \ell \le p-1$ with
 \begin{equation}\label{l-value}
 \ell^{'} \equiv  e+\ell_1 + d_{j} ~(\bmod \, p),
 \end{equation}
 and the value $e'$ is required to satisfy
 \begin{equation}\label{det-value}
 e' = \frac{1}{p} \big( e + \ell_1+ d_j -\ell^{'}).
 \end{equation}
The final preperiodic digit exit edges go to vertices $\bw:=(v', f_0, e', a')$ in the earlier set.

It is straightforward to check that the underlying labeled graph has the 
right-resolving property. Next  one must check this extension 
preserves the  lifting property of paths, we omit  the details.

Finally we must  upper bound the total number of vertices in the graph $(\sG, \bw)$.
 One finds that the  preperiodic part contributes at most $2Q_0 V p$ vertices, and
 the periodic part contributes at most $2Q Vp$ vertices. 
 \end{proof} 
 
 \begin{rem}
The key features in  this proof are: (i) the $p$-adic carry digits propagate to higher powers of
 $p$ and do not disturb earlier $p$-adic digits; (ii) the size of the carry digits is bounded above.
 Property (i) fails in  real number arithmetic, and there is no real number analogue of 
 this result.
\end{rem}
%
%
%
\subsection{Minkowski sum of two $p$-adic path sets}\label{sec42}

We  show that the Minkowski sum 
of two $p$-adic path set fractals is itself a path set,
establishing Theorem  \ref{th14}. This proof  is constructive, but it no
longer produces a right-resolving presentation. 

\begin{proof}[Proof of Theorem \ref{th14}.]
We suppose that $Y_1 :=(\bar{i}_p, \sG_{1}, v_1)$ and $Y_2:= (\bar{i}_p, \sG_2, v_2)$ come
with 
standard presentations.
We use these presentations to directly construct a  presentation $Y':= (\bar{i}_p, \sG_{1,2}, \bw_0)$,
which is not necessarily standard, and show that $Y'= Y_1+Y_2$, the Minkowski sum, 
 certifying membership in $\sC(\ZZ_p)$.
  
To begin the construction,  $\sG_{1, 2}$ will have vertices labeled  $\bw:=(v_{j, 1}, v_{k, 2}, e, a)$
where $v_{j,1} \in V(\sG_1), v_{k,2} \in V(\sG_2)$, and $e \ge 0 $ is an integer encoding 
 carry-digit information, and $0 \le a \le p-1$ specifies an allowed edge entry label in $\sG_{1,2}^{+}$.
  
The exit edges from a given vertex $\bw$ go to a new vertex $\bw'= (v_{j',1}, v_{k',2}, e',a')$ in which

(a) there is a directed edge of $\sG_1$ from $v_{j,1}$ to $v_{j', 1}$ having label $\ell_1$
satisfying $\ell_1=a'$;

(b) a directed edge of $\sG_2$ from $v_{k,2}$ to $v_{k', 2}$ with label $\ell_2$, and

(c) the constructed edge is assigned the label $\ell$, $0 \le \ell \le p-1$, determined by
\[ 
\ell \equiv  e + \ell_1 + \ell_2~(\bmod ~p), ~~~0 \le \ell \le p-1
\]

(d) the  new carry-digit is
\[
e' = \frac{1}{p} \big( e + \ell_1 + \ell_2 - \ell) \ge 0.
\]

The initial pointed vertex of the graph $\sG_{1,2}$ is $\bw_0:=(v_{0,1}, v_{0,2}, 0,0).$
We now define $\sG_{1,2}$ to consist of all vertices above reachable 
from $\bw_0$ by  some directed
path.  We show this is  a finite graph by establishing that  that the ``carry-digit" 
in any reachable vertex satisfies $e \le 2$. This follows by induction on the length of 
the path. The base case is $n=0$ where $e=0$. For  the induction step,
we upper bound the new value of $e$ via
$$
e' = \frac{1}{p} \big(e+ \ell_1+ \ell_2 - \ell) \le \frac{1}{p} \big( 2 + (p-1) + (p-1)\big) \le 2,
$$
completing the induction step. We then insert all edges between these vertices produced
in the construction above.

To see that $Y_1^{'} = Y_1+ Y_2$, we prove
by induction on $n \ge 0$ that being at a vertex $\bw$ at step $n$, 
having gotten a specified series of edge labels, following
a given lifted path  implies that:
\begin{enumerate}
\item
the steps and vertices of the lifted path have sufficient information to reconstruct
 two paths of input $y_1$ and $y_2$ producing that path; 
\item
 the first $n$ $p$-adic digit symbols of 
 $y_1+ y_2$ have been correctly computed by symbols of the steps of the path so far, namely that if 
$$
y_{i,n} = \sum_{k=0}^{n-1}  a_{k,i} p^k,~~~i=1,2,
$$
then
$$
y_{1,n}+y_{2,n} = \sum_{k=0}^{n-1} b_k p^k  + e p^{n},
$$
where $e= e_n$ is the current carry-digit, and the $b_i$ are the edge labels
produced so far in the graph $\sG_{1,2}^{+}$. 
\end{enumerate}

Suppose that  the next directed edge 
moves to a vertex $\bw'= \bw_{n+1}$, with data $(e', a')$.
Then we have $a'= \ell_1 = a_{n,1}$ and 
$$
e' = \frac{1}{p} \big( \ell_1 + \ell_2 - \ell + e\big)= \frac{1}{p}\big( a + \ell_2 -\ell +e\big)
$$
Since $e, a, \ell$ are known, this equation uniquely determines the label $\ell_2=a_{n,2}$.
Since both $\sG_1$ and $\sG_2$ are
right-resolving, the edges $(j, j')$ 
and  $(k, k')$ with the labels $a_{n_1}, a_{n,2}$ are legal steps which
uniquely determine  the  edges updating $y_{1,n}, y_{2,n}$
to $y_{1, n+1}, y_{2, n+1}$. Now the definition of edge labels in $\sG_{1,2}$
assigns the label $b_n:= \ell$ to the edge of $\sG_{1,2}$ and $e' = e_{n+1}$ in
$$
y_{1,n+1}+y_{2,n+1} = \sum_{k=0}^{n} b_k p^k  + e' p^{n},
$$
completing the induction step. 
\end{proof}

\begin{rem}
The  presentation $Y_1+Y_2 =(\bar{i}_p, \sG_{1,2}^{+}, \bw_0)$ 
 in this construction is generally far from right-resolving. 
This occurs because some values $y= y_1+y_2 \in Y_1+Y_2$
may have more than one representation $(y_1, y_2)$. This construction produces a separate path for
each pair $(y_1, y_2)$, so more than one path can yield the same sequence of labels. 
\end{rem}

%
%
%

\section{$p$-adic multiplication and  path set fractals} \label{sec5}

%
%
%

\subsection{Multiplication by $p$-integral rational numbers} \label{sec51}

We give constructive proof for multiplication by rational
numbers of specific types.

\begin{thm}\label{th51}
Let $Y$ belong to  $\sC(\ZZ_p)$ and suppose it has a
standard presentation
 $(\bar{i}_{p}, \sG, v_0)$ having $V$ vertices.  Let   $M \ge 2$
 be a positive integer with $\gcd(p, M)=1.$

 (1) For  $r= M$  the multiplicatively shifted set 
 $Y^{'} := M Y \in \sC(\ZZ_p)$. It has a 
 right-resolving presentation having at most
 $ (M+1)V$ vertices.

 (2)  For  $r= \frac{1}{M}$ 
 the multiplicatively  shifted set $Y^{'} := \frac{1}{M} Y \in \sC(\ZZ_p)$. It  has a 
 right-resolving presentation having at most
 $ (M+1)V$ vertices.

(3) For $r= -1$  the multiplicatively shifted set
$Y^{'} :=  -  Y \in \sC(\ZZ_p)$. It has a 
 right-resolving presentation having at most
 $ 2V$ vertices.
 
 (4) For $r = p^k$,  $k \ge 0$,  the  multiplicatively shifted set
$Y^{'} :=  p^k Y \in \sC(\ZZ_p)$. It has a 
 right-resolving presentation having at most $ k+ V$ vertices.
\end{thm}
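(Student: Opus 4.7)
The plan is to prove each of the four cases by directly constructing a standard (right-resolving, reachable) presentation $Y' = (\bar{i}_p, \sG', \bw_0)$ of $rY$ from the given standard presentation $(\bar{i}_p, \sG, v_0)$ of $Y$, augmenting the vertex set of $\sG$ with a bounded amount of auxiliary bookkeeping and verifying $Y' = rY$ by induction on path length, matching truncated $p$-adic sums step by step as in the proof of Theorem \ref{th41}.

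For case (1), multiplication by a positive integer $M$ with $\gcd(M, p) = 1$, I would mimic the carry-digit construction of Theorem \ref{th41}. New vertices are pairs $(v, e)$ with $v \in V(\sG)$ and $e \in \{0, 1, \ldots, M\}$ a forward carry. For each exit edge $v \to v'$ of $\sG$ with label $a$, write $Ma + e = \ell + p e'$ with $0 \le \ell \le p-1$, and install an exit edge $(v, e) \to (v', e')$ labeled $\ell$; the initial vertex is $(v_0, 0)$. The bound $Ma + e \le M(p-1) + M = Mp$ forces $e' \le M$, giving at most $(M+1)V$ reachable states, and the map $a \mapsto \ell$ is a bijection on $\{0, \ldots, p-1\}$ for fixed $(v, e)$ since $\gcd(M, p) = 1$, so the presentation is right-resolving. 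A telescoping identity then shows that the partial output after $n$ steps plus $e_n p^n$ equals $M$ times the partial input, so passing to the $p$-adic limit gives $My$.

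For case (2), $r = 1/M$ with $\gcd(M, p) = 1$, the construction is $p$-adic long division: vertices $(v, c)$ with $c \in \{0, \ldots, M-1\}$; given an exit edge $v \to v'$ with label $a$, take the unique $\ell \in \{0, \ldots, p-1\}$ solving $M\ell \equiv a - c \pmod p$ (using $M^{-1} \bmod p$), set $c' := (M\ell + c - a)/p$, and install the edge $(v, c) \to (v', c')$ labeled $\ell$; one checks $c' \in \{0, \ldots, M-1\}$, the state count is at most $MV < (M+1)V$, and right-resolvingness holds as $\ell$ is determined by $(v, c, a)$. For case (3), $r = -1$, I use the closed form stating that $-y$ has $p$-adic digits $0, \ldots, 0, p - a_k, p - 1 - a_{k+1}, p - 1 - a_{k+2}, \ldots$, where $k$ is the least index with $a_k \ne 0$; this is captured by a two-state flag automaton with new vertices $(v, s)$, $s \in \{0, 1\}$, indicating whether the first nonzero input digit has appeared, giving at most $2V$ states. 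For case (4), $r = p^k$, I simply prepend a chain of $k$ fresh vertices $u_0 \to u_1 \to \cdots \to u_{k-1} \to v_0$ with all edge labels $0$ and declare $u_0$ the initial vertex, for a total of $V + k$ vertices.

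The main obstacle is case (2): inverting $M$ in the $p$-adic digits requires that the forward carry determined at each step never escape a finite range, and this is precisely where $\gcd(M, p) = 1$ is crucial, forcing $M\ell \equiv a - c \pmod p$ to have a unique digit-range solution and keeping $c' \in \{0, \ldots, M - 1\}$. Once this invariant is in place, the inductive verification that $M$ times the emitted partial output matches the partial input modulo $p^n$ closes the argument. Cases (1), (3), (4) are then straightforward automaton bookkeeping by comparison, with case (1) providing the general template, case (3) handled by a direct closed-form rule, and case (4) by prepending a constant zero segment.
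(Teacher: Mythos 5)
Your proposal follows essentially the same route as the paper's proof: bounded-carry transducer automata on state pairs $(v,e)$ for multiplication by $M$ and by $1/M$ (with right-resolving following from invertibility of $M$ mod $p$), a two-state sign/flag automaton realizing the digit rule $p-a_k,\ p-1-a_{k+1},\dots$ for $r=-1$, a prepended zero-labeled chain for $r=p^k$, and an induction matching truncated partial sums plus a carry term $e_np^n$. The only (harmless) deviation is your slightly sharper carry range $\{0,\dots,M-1\}$ in case (2), giving $MV$ rather than $(M+1)V$ states.
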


\begin{proof}
We are given $Y \in \sC(\ZZ_p)$, with a standard presentation  $Y= (\bar{i}_p, \sG, v_0)$,
in which $\sG$ has $V$ vertices.
For each given $r$ we  give an explicit construction of 
  a standard 
presentation $Y'  :=(\bar{i}_p, \sG', \bw_0)$ 
and establish that $Y'= rY$ in each case.
The constructions in cases (1)-(3) are similar.

(1) Here $r=M$ with 
$p \nmid M$, and 
we construct a presentation $Y'$ of a $p$-adic path set fractal and show 
 $Y'=M Y$.
We start with an (infinite)  graph $\sG'' $ whose vertices
will be pairs $\bw= (v, e)$, in which $v$ is a vertex of $\sG$,
and  $e \ge 0$ is a carry-digit. The initial vertex is $\bw_0 :=(v_0, 0)$. 
The exit edges from a vertex $\bw$ to a vertex $\bw^{'}=(v', e')$ will occur only
if there is at least one  edge from $v$ to $v'$. Given  such an edge of $\sG$ with  label $\ell$,
we assign a corresponding  edge of $\sG'$with label $\ell'$ given  by
\begin{equation}\label{eq431}
\ell'= M\ell +e ~~(\bmod ~p), ~~~0 \le \ell' \le p-1,
\end{equation}
which is well-defined since $(p, M)=1$.
We require that the new carry digit be
\begin{equation} \label{eq432}
e'  := \frac{1}{p} \big( e +M\ell - \ell' \big).
\end{equation}

We define $(\sG', \bw_0)$ to the graph obtained taking all vertices reachable from $\bw_0$
in the above construction. We prove that all reachable vertices have carry-digit  $0\le e \le M$ by induction 
on the number of steps $n$ on a minimal path to such a vertex. 
The base case $n=0$ is true, since $e=0$, and  the induction step follows by observing
from (\ref{eq432})  that
$$
e' \le \frac{1}{p} \big( M + M(p-1)\big) \le M.
$$ 
We conclude  that the graph $\sG'$ has at most $(M+1)V$ vertices.

We now set $Y'= (\bar{i}_p, \sG', \bw_0)$, and first show this presentation is standard. We first
claim that $\sG'$ is right-resolving. We argue by contradiction.
If not, there would be two exit edges of  some vertex $\bw=(v, e)$  of $\sG$ having the same value of $\ell'$.
But then  by (\ref{eq431})
the underlying edges of $(\sG, v_0)$ would have the same value of $\ell$,
contradicting the right-resolving property of $(\sG, v_0)$. By construction $(\sG', \bw_0)$ is reachable,
hence this presentation of $Y'$ is standard.

It remains to show that  $Y^{'} = M Y$.
Consider an infinite path in $\sG$, 
with image
$$
y_{\infty} = \sum_{j=0}^{\infty} \ell_j p^j \in Y.
$$
We assert the corresponding output path 
$$
y_{\infty}^{'} = \sum_{j=1}^{\infty} \ell_j^{'} p^j \in Y'
$$
has 
$$
y_{\infty}^{'} = \frac{1}{M} y_{\infty}.
$$
We verify this by  induction on the length of a finite path approximating $y_{\infty}.$ Let
$v_0, v_1, ..., v_n$ be states on a path in $ sG$ with edge labels $\ell_0, \ell_1, \ell_{n-1}$.
Associated to this path is 
$$
y_n = \ell_0 + \ell_1 p + \cdots + \ell_{n-1} p^{n-1}.
$$
By construction we obtain
$$
y_n^{'} = \sum_{j=0}^{n-1} \ell_j^{'} p^j.
$$
Now we prove by induction on $n \ge 1$ that
$$
y_n^{'} = My_n + e_n p^n
$$
where $e_n = e'$, where $e'$ is the carry value at the final vertex $v_{n}$.
The base case $n=1$ is clear, and 
for the induction step ($e_n=e, e_{n+1}= e'$) we get, using (\ref{eq432}),
\begin{equation*} 
M y_{n+1} =  M\big(y_n + \ell_n p^n\big) = y_n^{'} +(e_n  + M\ell_n) p^n = y_{n}^{'}  + e_{n+1} p^{n+1}
\end{equation*}
Now we use $| e_n p^n|_p \to 0 $ as $n \to \infty$, whence $My_{\infty}^{'} =y_{\infty}$,
establishing the result.

(2) Here $r= \frac{1}{M}$ with   $p \nmid M$, and we construct a presentation $Y=(\bar{i}_p, \sG^{'}, \bw_0)$
of $\frac{1}{M}Y$.
We start with a (infinite) graph $\sG_M$, whose 
vertices will be pairs  $\bw= (v, e)$, in which $v$ is a vertex of $\sG$,
 and $e \geq 0$ is a carry digit, initially unbounded.
The initial vertex is $\bw_0 :=(v_0, 0)$. 
The exit edges from a vertex $\bw=(v,e)$ to a vertex $\bw=(v', e')$ will occur only
if there is at least one  edge in $\sG$ from $v$ to $v'$. Given  such an edge of $\sG$ with  label $\ell$,
we assign a corresponding  edge of $\sG'$with label $\ell'$ given  by
\begin{equation}\label{eq421}
M\ell'= \ell -e ~~(\bmod ~p), ~~~0 \le \ell' \le p-1,
\end{equation}
which is well-defined since $(p, M)=1$.
We require that the new carry digit be
\begin{equation} \label{eq422}
e'  := \frac{1}{p} \big( e +M\ell' - \ell \big).
\end{equation}

We now define $(\sG', \bw_0)$ to be the graph obtained by including only the vertices reachable from $\bw_0$
in the above construction. We now show all reachable vertices have carry-digit $0\le e \le M$, by induction 
on the number of steps $n$ on a minimal path to such a vertex. 
The base case $n=0$ holds since $e=0$, and the induction step follows by observing
from (\ref{eq422})  that
$$
e' \le \frac{1}{p} \big( M + M(p-1)\big) \le M.
$$ 
We conclude  that the graph $\sG'$ has at most $(M+1)V$ vertices.

We now define the $p$-adic path set fractal $Y':= (\bar{i}_p, \sG', \bw_0)$, and first show
this presentation is standard. 
To show $\sG'$ is right-resolving, we  argue by contradiction.
If not, there would be two exit edges of  some vertex $\bw=(v, e)$  of $\sG$ having the same value of $\ell'$.
But then  by (\ref{eq421})
the underlying edges of $(\sG, v_0)$ would have the same value of $\ell$,
contradicting the right-resolving property of $(\sG, v_0)$. It is reachable by construction,
so it is a standard presentation. 

It remains to establish that $Y' =\frac{1}{M} Y$, still
supposing $(p, M)=1$. Consider an infinite path in $\sG$, 
with image
$$
y_{\infty} = \sum_{j=0}^{\infty} \ell_j p^j \in Y.
$$
We assert the corresponding output path 
$$
y_{\infty}^{'} = \sum_{j=1}^{\infty} \ell_j^{'} p^j \in Y'
$$
has 
$$
y_{\infty}^{'} = \frac{1}{M} y_{\infty}.
$$
We verify this by induction on the length of a path
starting from the initial vertex. Let
$v_0, v_1, ..., v_n$ be vertices on a path in $\sG$ with edge labels $\ell_0, \ell_1, \ell_{n-1}$.
Associated to this path is 
$$
y_n = \ell_0 + \ell_1 p + \cdots + \ell_{n-1} p^{n-1}.
$$
By construction we obtain
$$
y_n^{'} = \sum_{j=0}^{n-1} \ell_j^{'} p^j.
$$
Now we prove by induction that
$$
M y_n^{'} = y_n + e_n p^n
$$
where $e_n = e'$ is the carry value at the final vertex $v_{n}$.
For the induction step ($e_n=e, e_{n+1}= e')$) we get, using (\ref{eq422}) 
\begin{equation*} 
My_{n+1}^{'} =  M\big(y_n^{'} + \ell_n^{'} p^n\big) = y_n +(e_n  + M\ell_n^{'}) p^n = y_{n+1}  + e_{n+1} p^{n+1}
\end{equation*}
Now we use $| e_n p^n|_p \to 0 $ as $n \to \infty$, whence $My_{\infty}^{'} =y_{\infty}$
establishing the result.

(3)  Given a standard presentation of $Y:= (\bar{i}_p, \sG, v_0)$, we construct
a standard presentation $Y' :=(\bar{i}_p, \sG'', \bw_0)$ which has
$Y':=- Y$, as follows.

The vertices of $\sG' $ will be pairs $\bw= (v, e)$, in which $v$ is a vertex of $\sG$, and
 $e$ is a carry digit, which may take values  $0$ or $-1$.
The initial vertex will be  $\bw_0 :=(v_0, 0)$. 
The exit edges from a vertex $\bw$ to a vertex $\bw^{'}=(v', e')$ will occur only
if there is at least one  edge in $\sG$ from $v$ to $v'$. Given  such an edge of $\sG$ with  label $\ell$,
we assign to it a corresponding  edge of $\sG'$ from $\bw$ to $\bw'$ with label $\ell'$ given  by
\begin{equation}\label{eq441}
\ell'= -\ell + e ~~(\bmod ~p), ~~~0 \le \ell' \le p-1,
\end{equation}
If the current vertex has $e=0$ and $\ell=0$, then the  new vertex has $\ell'=0$
and is assigned carry digit $e'=0$. If either $e=-1$ or if $e=0$ and $\ell >0$,
then the  new carry digit $e'=-1$. Once a path in  $\sG'$ reaches
a vertex with carry digit $e'= -1$, all subsequent vertices reached have carry digit $-1$.
Note that when $e=-1$ we have $-p \le -\ell -1 \le -1$ and $\ell' = p-\ell -1.$

We now let  $\sG'$  denote the part of the graph above reachable from the initial vertex $\bw_0$.
This graph has at most $2 V$ vertices. We then insert all edges between reachable vertices
produced in the construction above.

We now set $Y': =(\bar{i}_p, \sG', \bw_0)$, and as before check that this is
a standard presentation. 
We claim that $\sG'$ is right-resolving.
This is clear since the label $\ell'$ on exit edges from a vertex $\bw$ are in one-one
correspondence with labels on exit edges in $\sG$ from the associated vertex $v$,
via (\ref{eq441}). The graph $\sG'$ is reachable by construction. 

It remains to show that $Y'= -Y$.
We let
$y_n = \sum_{j=0}^{n-1} \ell_j p^j$
and
$$
y_n^{'} = \sum_{j=0}^{n-1} \ell_j p^j.
$$
We have $y_n^{'} = y_n=0$ as long as the carry digit $e=0$.
Let $\ell_r-1$ be the first nonzero digit on the path, where
the carry digit switches to $-1$. From then on 
switches to $e= -1$, we have
$$
y_n^{'} = (p- \ell_{r-1}) p^{r-1} + \sum_{j=r}^{n-1} (p-\ell_j -1)p^j = -y_n +p^n.
$$
Letting $n \to \infty$ we obtain $y_{\infty}^{'}= -y_{\infty}$, establishing
the result.

(4) Let $r=p^k$ with $n \ge 1$. For $k \ge 0$ the  set $p^k Y$ consists of modifying all symbol sequences in $Y$ 
by adding 
$k$ initial zeros.  A standard presentation $Y'= (\bar{i}_p, \sG', \bw)$ for this set
is easily obtained. Let $\sG'$ consist of $\sG$ with the addition of  
 $k$ new vertices $\bw_j \, (0 \le j \le k-1)$.
 Each of the new vertices has a single exit edge from $\bw_j$ to $\bw_{j+1}$ assigned label $0$, for $0 \le j \le k-2$,
and a similar exit edge labeled $0$ from $\bw_{k-1}$ to $v_0$. The start vertex of
$\sG'$ is $\bw_0$, and $\sG'$ has $k+V$ vertices.
\end{proof}

\begin{rem}
Theorem \ref{th51} excluded the case
 ``multiplication by $p^k$ with $k <0 $, since
 these maps do not have range in  $\ZZ_p$.

\end{rem}

%
%
%

\subsection{Proof of Theorem \ref{th16} } \label{sec52}
Theorem \ref{th16} follows immediately  from Theorem~\ref{th51}.

\begin{proof}[Proof of Theorem  \ref{th16}]
Let $r$ be a $p$-integral rational number, i.e. $ord_p(r) \ge 0$.
We may factor $r = (-1)^a p^k \frac{M_1}{M_2}$, in which $a \in \{0, 1\}, \, k \ge0$
and $\gcd (p, M_1 M_2)=1$. Now we successively apply the constructions
in Theorem \ref{th51} to multiply $Y$  by $\frac{1}{M_1}$, next multiply the
resulting set by  $M_2$, next multiply the resulting set by  $(-1)^a$, and finally 
multiply the resulting set 
by $p^k$. 
\end{proof}

%
%
%

\section{Examples }\label{sec6}

In the following examples, we let $\Sigma_p(\sD)$ denote the $p$-adic integer Cantor set consisting of
all $p$-adic integers whose digits are drawn from a given  set \newline
$D \subseteq \{0, 1, \cdots , p-1\}$.
All $\Sigma_p(\sD) \in \sC(\ZZ_p)$, and have Hausdorff dimensions \newline
\[
d_H(\Sigma_p(D)) = \log_p |D|= \frac{ \log |D|}{\log p}. 
\]


\begin{exmp} \label{ex61}
This example concerns  adding a $p$-integral rational number $r$ to  
the $3$-adic Cantor set $Y_{01}:=\Sigma_{3}(\{0,1\})$, whose $3$-adic expansions omit
the digit $2$.
It has a right-resolving presentation as
a $3$-adic path set by the pointed labeled graph $(\sG, 0)$ pictured in  Figure ~\ref{fig1}.

\begin{figure} [H]
	\centering
	\psset{unit=1pt}
	\begin{pspicture}(-100,0)(100,120)
		\newcommand{\noden}[2]{\node{#1}{#2}{n}}
		\noden{$0$}{0,60}
		\bcircle{n$0$}{90}{0}
		\bcircle{n$0$}{270}{1}
	\end{pspicture}
	\caption{Presentation $(\sG,0)$ of $\Sigma_{3}(\{0,1\})$.} \label{fig1}
\end{figure}

This graph has adjacency matrix 

\begin{equation*}
\bf{A} = \left(\begin{array}{cccccc}
2 \\
\end{array}\right),
\end{equation*}
whose Perron-Frobenius eigenvalue is $2$, hence the Cantor set
$\Sigma_{3}(\{0,1\})$ has Hausdorff dimension $d_H(\Sigma_{3}(\{0,1\})) = \log_3 2$, as stated above. 

Now we consider the effect of additively
shifting by  $r=2$.  The construction of Section ~\ref{sec41} applied to
the presentation above yields the
$p$-adic path set fractal
 presentation of 
$Y_{01}+2= \Sigma_{3} (\{0,1\}) + 2,$
 given in 
Figure ~\ref{fig2}, denoted 
$$Y_{01}+2 = i_3(X_{\sG'}(0200)).  $$

\begin{figure}[H]
	\centering
	\psset{unit=1pt}
\begin{pspicture}(-105,-15)(105,165)
		\newcommand{\noden}[2]{\node{#1}{#2}{n}}
		\noden{$0200$}{-60,120}
		\noden{$0000$}{60,120}
		\noden{$0011$}{-60,30}
		\noden{$0001$}{60,30}
		\bcircle{n$0000$}{0}{0}
		\bcircle{n$0001$}{180}{1}
		\dline{n$0000$}{n$0001$}{1}{0}
		\aline{n$0200$}{n$0000$}{2}
		\aline{n$0200$}{n$0011$}{0}
		\aline{n$0011$}{n$0000$}{1}
		\aline{n$0011$}{n$0001$}{2}
	\end{pspicture}
\bigskip	
	\caption{Presentation $(\sG', 0200)$ of $Y_{01}+ 2$.} \label{fig2}
\end{figure}

Under one ordering of the vertices of $\mathcal{G}'$, the adjacency matrix of the underlying (undirected) graph of $\mathcal{G}'$ is 
\begin{equation*}
\bf{A'} = \left(\begin{array}{cccccc}
0 & 1 & 1 & 0 \\
0 & 0 & 1 & 1 \\
0 & 0 & 1 & 1 \\
0 & 0 & 1 & 1 \\
\end{array}\right).
\end{equation*}
The eigenvalues of $\bf{A'}$ are $2$ and $0$ (multiplicity $3$), so we see the Perron  eigenvalue is $2$. 
Thus  the Hausdorff dimension is 
$$
d_H(Y_{01} + 2) = \frac{\log 3}{\log 2} = d_{H}(Y_{01})
$$
Here $Y_{01}+ \alpha$ for any $\alpha \in \ZZ_3$ must have
the same Hausdorff dimension, because they are bi-Lipschitz equivalent, hence their adjacency matrices must
have the same Perron eigenvalue. Note that  only a countable set of
values of  $\alpha$ can give  $Y_{01} + \alpha \in \sC(\ZZ_3)$, since $\sC(\ZZ_3)$ is
a countable set.
\end{exmp}

\begin{exmp}\label{ex62}
We consider the effect of set addition on $5$-adic Cantor sets $\Sigma_5(D)$ for certain subsets of
digits $D$. For all sets of two digits, we have $d_H( \Sigma_5( \{ a, b\}) = \log_5 2.$ 
Set
$$
Y_{i, j} := \Sigma_5(\{ 0,i\}) + \Sigma_5(\{ 0, j\}),  \mbox{for} ~~1 \le i, j\le 4. 
$$
One can show that
\begin{equation}\label{eq62}
\log_5 3 \le d_{H}(X_{i,j}) \le  \log_5 4.
\end{equation}
We find by inspection that the sums of certain Cantor sets are themselves Cantor sets: 
$$
Y_{1, 1}:= \Sigma_5(\{ 0,1\}) + \Sigma_5(\{ 0, 1\}) = \Sigma_5( \{ 0,1,2\})
$$
$$
Y_{1,2} =\Sigma_5(\{ 0,1\}) + \Sigma_5(\{ 0, 2\}) = \Sigma_5( \{ 0,1,2,3\})
$$

These examples  have Hausdorff dimensions 
$d_{H}(Y_{1,1}) = \log_5 3$ and
$d_{H} (Y_{1,2}) = \log_5 4$,  
respectively,  and they show that the bounds in (\ref{eq62}) are sharp.
Much more interesting are the sets $Y_{2,3}$ and $Y_{1,4}$, which are not Cantor sets;
here the  $p$-adic carry operations occuring during addition in the set sum destroy the Cantor
set property. To compute their
Hausdorff dimension, we  first find $p$-adic path set presentations for them by
the construction of Theorem \ref{th14}. These presentations are not  right-resolving, but we 
then apply the  subset construction method in \cite[Section 2]{AL12a} to obtain a right-resolving presentation.
We omit the details, noting only that  
for $Y_{1,4}$  we find the resulting graph has five vertices and adjacency matrix
\begin{equation*}
\bf{A_{14}} = \left(\begin{array}{ccccc}
1& 1 & 1 & 0 &0\\
1 & 1 & 1 & 0  &0\\
1 & 1& 0 & 1 & 1  \\
1 & 1& 0 & 1 & 1  \\
 1& 1 & 0 & 1 & 1  \\
\end{array}\right).
\end{equation*}
Its Perron eigenvalue is $2 +\sqrt 2$.
Computing its Hausdorff dimension by the formula of Theorem \ref{th31a},
we obtain 
$$d_{H}(Y_{1,4}) = \log_5 (2+ \sqrt{2})\approx \log_5 (3.41412).$$
A similar construction for $Y_{2,3}$ leads to 
$$d_H(Y_{2,3} ) =\log_5 (2+ \sqrt{3}) \approx \log_5 (3.73205).$$
\end{exmp}
\begin{exmp}\label{ex63}
We  consider on the
effect on the $3$-adic Cantor set $Y_{01} :=\Sigma_{3}(\{0,1\})$  of a multiplicative translation by $r =\frac{1}{4}.$
The set $\frac{1}{4} Y_{01}$ has a presentation 
$(\mathcal{H}, 00)$ obtained from
that of $\Sigma_{3}(\{0,1\})$ given by  $(\mathcal{G}, 0)$, 
using  the construction given in  Section ~\ref{sec51}. This presentation  is shown in Figure $~\ref{fig3}$.

\begin{figure}[H] 
	\centering
	\psset{unit=1pt}
\begin{pspicture}(-105,-5)(105,155)
		\newcommand{\noden}[2]{\node{#1}{#2}{n}}
	\noden{$00$}{-60,120}
		\noden{$01$}{60,120}
		\noden{$02$}{-60,30}
		\noden{$03$}{60,30}
		\bcircle{n$00$}{45}{0}
		\bcircle{n$02$}{135}{1}
		\dline{n$00$}{n$01$}{1}{0}
		\aline{n$03$}{n$00$}{0}
		\aline{n$01$}{n$03$}{2}
		\dline{n$02$}{n$03$}{2}{1}
	\end{pspicture}
	\smallskip

	\caption{Presentation $(\mathcal{H},00)$ of $\frac{1}{4} \Sigma_{3} (\{0,1\})$.}\label{fig3}
\end{figure}

The adjacency matrix $\bf{B}$ of the underlying graph of $\mathcal{H}$ is
\begin{equation*}
\bf{B} = \left(\begin{array}{cccccc}
1 & 1 & 0 & 0  \\
1 & 0 & 0 & 1 \\
0 & 0 & 1 & 1  \\
1 & 0 & 1 & 0  \\
\end{array}\right).
\end{equation*}
This matrix has  Perron eigenvalue  $2$, and it has three other smaller nonzero eigenvalues,
one  real and two conjugate complex.  Using the formula in Theorem \ref{th31a}(3) we obtain
$d_H (\frac{1}{4} Y_{01}) = \log_3 2$. 
\end{exmp}
\begin{exmp}\label{ex64}
In this  example we consider the effect of intersecting multiplicatively translated 
Cantor sets taken from Example \ref{ex63}.  Let
$$
Y:= \frac{1}{4} Y_{01} \cap Y_{01} = \Sigma_3(\{0,1\})\cap \frac{1}{4} \Sigma_3(\{0,1\}).
$$ 
We obtain by the method of  \cite[Section 4]{AL12a} applied to the presentations above the
presentation of  $(\mathcal{H}',000)$ shown in Figure ~\ref{fig4}, 
where $\mathcal{H}'$ is the label product $\mathcal{H}' = \sG \star \mathcal{H}$, 
as defined in \cite[Section 4]{AL12a}.

\begin{figure}[H]
	\centering
	\psset{unit=1pt}
\begin{pspicture}(-100,-5)(100,60)
		\newcommand{\noden}[2]{\node{#1}{#2}{n}}
	\noden{$000$}{-60,30}
		\noden{$001$}{60,30}
		\bcircle{n$000$}{90}{0}
		\dline{n$000$}{n$001$}{1}{0}
	\end{pspicture}
	\caption{Presentation $(\mathcal{H}',000)$ of $\Sigma_3(\{0,1\}) \cap \frac{1}{4} \Sigma_{3}(\{ 0,1\})$.} 
	\label{fig4}
\end{figure}

The adjacency matrix of the underlying graph of $\mathcal{H}'$ is 
\begin{equation*}
\bf{B}' = \left(\begin{array}{cccccc}
1 & 1  \\
1 & 0  \\
\end{array}\right),
\end{equation*}
whose Perron  eigenvalue is $\frac{1 + \sqrt{5}}{2}$. We conclude that 
$$
d_H(Y) = d_H(\Sigma_{3} (\{0,1\})\cap \frac{1}{4} \Sigma_{3}(\{0,1\})) = \log_3\left(\frac{1 + \sqrt{5}}{2}\right).
$$
 
 \end{exmp}

\begin{rem}
 In ~\cite{AL12c} we   will study intersections of multiplicative translates of $3$-adic Cantor sets in much more detail.

 \end{rem}
 
 
 \section{Concluding Remarks}\label{sec7}
 
 The constructions of this paper may prove interesting from the viewpoint of 
 nonnegative integer matrices and
 their eigenvalues. 
  By Theorem \ref{th12} the Hausdorff dimension
 is given by the base $p$ logarithm of the spectral radius of the underlying adjacency matrix of 
 a standard path set presentation graph, which is a nonnegative integer matrix. 
 As noted in Section \ref{sec12}, for nonzero $r \in \QQ \cap \ZZ_p$
 the maps $X \mapsto X+ r$ and $X \mapsto rX$
 preserve Hausdorff dimension. On the level of path set presentations 
 these constructions therefore produce infinitely many different
 integer matrices, of varying dimensions,
 all having  the same spectral radius, plus  various eigenvalues
 of smaller modulus
 whose cardinality and size will change under these operations.  
The allowed dimension of these matrices as the parameter $r$ varies will be unbounded.

The spectral radius of a nonnegative matrix is always attained by
a nonnegative real eigenvalue, according to the 
Perron-Frobenius theory. In the special case of nonnegative integer matrices ${\bf A}$
this maximal eigenvalue is a real algebraic integer $\beta$,
and if $\bf{A}$ is not nilpotent, then  $\beta \ge 1$.
It is termed the {\em Perron eigenvalue} in Lind and Marcus \cite[Definition 4.4.2]{LM95}. 
This eigenvalue is necessarily a weak Perron number, which is defined to
be any positive $n$-th root of some Perron number (\cite{Lin87}) for some $n \ge1$;  a
{\em Perron number} is any  real algebraic integer $\beta\ge 1$ which
is strictly larger in absolute value than all of its conjugates. 
 Lind \cite[Theorem 1]{Lin84} showed that the Perron eigenvalue of any aperiodic nonnegative  integer
matrix is a Perron number, and that conversely every Perron number occurs
as the Perron eigenvalue of some aperiodic nonnegative integer matrix. 
More generally the Perron eigenvalue of any non-nilpotent nonnegative
integer matrix is a weak Perron number, and conversely every weak Perron number
occurs as the Perron eigenvalue of at least one such  matrix. Perron numbers appear
as the topological entropies of Axiom A diffeomorphisms via a result of
Bowen \cite{Bow70}, see \cite[p. 288]{Lin84}.

 The constructions in this paper
 could be of interest  in investigating and producing examples of 
  graphs with a fixed Perron eigenvalue, particularly in case where this eigenvalue is
  very close to $1$.  In order to produce nonnegative
 matrices that have a given Perron number $\beta$ as spectral radius, it  is sometimes
 necessary to take a nonnegative matrix of dimension strictly larger than the degree of the
 minimal polynomial of $\theta$, see an example given in Lind \cite{Lin83}, \cite[Section 3]{Lin84}.
 In such cases  the characteristic polynomial of this matrix must
 contain extraneous eigenvalues. 
 The constructions of this paper offer a method to generate interesting examples
 of this kind.
Such graph constructions might also conceivably be useful in investigating conjectures on 
the smallest Perron number of each degree,
 a topic studied in  Boyd \cite{Boy85} and Wu \cite{Wu10}.
 A more speculative direction would be relating the structure of such graphs
 in connection with Lehmer's conjecture on the Mahler measure of 
 irreducible polynomials.

%
%
%

\end{document}